\documentclass[11pt,a4paper]{article}
\usepackage{pifont}
\usepackage{bbding}

\usepackage{amssymb}
\usepackage{mathrsfs}
\usepackage{amsfonts}
\usepackage{amsthm,amscd,amsmath}
\usepackage{eufrak}
\usepackage{youngtab}
\usepackage{graphicx}
\usepackage[all]{xy}

\renewcommand{\atopwithdelims}[2]{%
\genfrac{[}{]}{0pt}{}{#1}{#2}}

\makeatletter
\def\@biblabel#1{#1}
\makeatother

\newtheorem{theorem}{Theorem}[section]
\newtheorem{lemma}[theorem]{Lemma}

\theoremstyle{definition}
\newtheorem{definition}[theorem]{Definition}

\theoremstyle{proposition}
\newtheorem{proposition}[theorem]{Proposition}

\theoremstyle{example}
\newtheorem{example}[theorem]{Example}

\theoremstyle{corollary}
\newtheorem{corollary}[theorem]{Corollary}

\date{}

\begin{document}

\title{Composition Series of Tensor Product}

\author{Bin Li and Hechun Zhang\\
\small{Department of Mathematical Science}, \\
\small{Tsinghua University, Beijing, P.R. China, 100084}\\
\scriptsize{Email: hzhang@math.tsinghua.edu.cn,
 libin07@mails.tsinghua.edu.cn}}

\maketitle

\begin{abstract}
Given a quantized enveloping algebra $U_q(\mathfrak g)$ and a pair
of dominant weights ($\lambda$, $\mu$), we extend a conjecture
raised by Lusztig in \cite{Lusztig:1992}
 to a more general form and then prove this extended
  Lusztig's conjecture. Namely we prove that for
any symmetrizable Kac-Moody algebra $\mathfrak g$, there is a
composition series of the $U_q(\mathfrak g)$-module
$V(\lambda)\otimes V(\mu)$ compatible with the canonical basis. As a
byproduct, the celebrated  Littlewood-Richardson rule is derived and
we also construct, in the same manner, a composition series of
$V(\lambda)\otimes V(-\mu)$ compatible with the canonical basis when
$\mathfrak g$ is of affine type and the level of $\lambda-\mu$ is
nonzero.
\end{abstract}
\textbf{MSC2000:} 17B37, 20G42, 81R50\\

\noindent\textbf{Keywords:} Canonical basis, crystal basis,
composition series

\section{Introduction}

Let $U_q(\mathfrak g)$ be a quantized enveloping algebra associated
to an arbitrary symmetrizable Kac-Moody algebra $\mathfrak g$.
  In \cite{Lusztig:1992}, for a pair of dominant integral functions
  ($\lambda,$ $\mu$), Lusztig constructed a canonical basis for the $U_q(\mathfrak g)$-module
   $V(\lambda)\otimes
   V(-\mu)$, where $V(\lambda)$ is an irreducible highest
   weight integrable $U_q(\mathfrak g)$-module with highest weight
   $\lambda$ and $V(-\mu)$ is an
   irreducible lowest weight integrable $U_q(\mathfrak g)$-module
   with lowest weight $-\mu$.
   This basis has many remarkable properties and can be lifted
   to a basis of the modified quantized enveloping algebra $\widetilde{U}$. Since then the
   canonical basis as well as the corresponding crystal basis of both this
   tensor product and $\widetilde{U}$
   are widely investigated by many mathematicians
   e.g. \cite{Beck:2004, Kashiwara:1994, Lusztig:1993, Lusztig:1995}.

  Due to the stable property of the basis, there are quite a few submodules of  $V(\lambda)\otimes
   V(-\mu)$ compatible with the canonical basis, that is, every such submodule
   is spanned by parts of the basis. Lusztig conjectured further in
  \cite{Lusztig:1992} that in the case $\mathfrak g$ is of finite
   type there is a composition series of $V(\lambda)\otimes
   V(-\mu)$ compatible with the canonical basis and he proved the
   conjecture in the case of type $A_1$ by a direct computation.
    Later in chapter 27 of \cite{Lusztig:1993}
   concerning about the based module,
   Lusztig proved that  for any integrable $U_q(\mathfrak g)$-module
    $M=\bigoplus_{\xi\in P_+}M[\xi]$ in category $\mathcal O_{int}$ where
    $M[\xi]$ is the sum of all submodules of $M$ isomorphic to $V(\xi)$,
    $M[\lambda]$ is compatible with the canonical basis of $M$ if
    $\lambda$ is maximal among those $\xi$ such that $M[\xi]$ is nonzero.
     Though not pointing out, Lusztig's proof of this result implies the
      conjecture and provided an inductive
   construction for the composition series since,  in particular,
   $V(\lambda)\otimes V(-\mu)$ is  in category $\mathcal O_{int}$ when
    $\mathfrak g$ is of finite type. The crystal structures of both
    $V(\lambda)\otimes V(-\mu)$ and $\widetilde{U}$ are extensively investigated
     by Kashiwara
     in \cite{Kashiwara:1994}. In \cite{Lusztig:1995} Lusztig investigated the two-sided cells in
      the canonical basis of $\widetilde{U}$ for $\mathfrak g$ of finite type
      and he raised some conjectures in affine type case which were finally
       solved by Beck and Nakajima in \cite{Beck:2004}.

   In \cite{FJKMMT:2004}, a filtration of $V(\Lambda_{i})\otimes
   V(-\Lambda_{j})$ of $U_q(\mathfrak g)$ was constructed, for $\mathfrak g$
    which is of affine type and
   where $\Lambda_i$ and $\Lambda_j$ are fundamental weights. Each
    $U_q(\mathfrak g)$-submodules appeared in this  filtration
  is generated by the tensor product of $u_{\Lambda_{i}}$
   with an extremal vector of $ V(-\Lambda_{j})$.  It turns out that
   all of the $U_q(\mathfrak g)$-submodules appeared in this  filtration
  are compatible with the canonical basis which can be proved using
  an important lemma
  due to Kashiwara and some results for Demazure modules.
  Motivated by the construction of the filtration in \cite{FJKMMT:2004},
  we construct the composition series of $V(\lambda)\otimes V(\mu)$
  directly for $\mathfrak g$ of
any type in the same fashion. The conjecture by Lusztig is then a
special case since $V(\mu)$ is also a lowest weight module for
$\mathfrak g$ of finite type. This is quite different from  the
argument in Chapter 27 in  Lusztig's book \cite{Lusztig:1993} and
one can derive from our proof the Littlewood-Richardson rule for
decomposing the tensor product
 $V(\lambda)\otimes V(\mu)$ into the direct sum of irreducible modules, which
 is also known by the work of Littelmann \cite{Littelmann:1994}.

On geometric aspects, quiver varieties were introduced by Nakajima
in order to get integrable highest weight representations of
symmetric Kac-Moody algebra $\mathfrak g$. Furthermore, there is
also a geometric construction of tensor product $V(\lambda_1)\otimes
\cdots\otimes V(\lambda_r)$ using quiver varieties
\cite{Nakajima:2001}. To realize this tensor product, Malkin also
introduced in \cite{Malkin:2003} the tensor product variety. Though
both constructions are in classical case ($q=1$), it would be
interesting to consider the geometric construction of the
composition series using Nakajima's quiver variety or Malkin's
tensor product variety. We will study this topic in the forth coming
publications.

   The arrangement of the paper is the following: in section 2, we
   recall some  basics of the theory of  crystal basis and canonical
   basis. In particular, we recall the construction of the canonical basis
   of  $V(\lambda)\otimes
   V(-\mu)$ due to Lusztig. Next in section 3, the extended Lusztig's
   conjecture is proved by building up the required composition series explicitly
   using the theory of crystal basis due to Kashiwara. Then we reintroduce
    the Littlewood-Richardson rule and
   compare this composition series with Lusztig's inductive construction.
   Finally in the last section we study the tensor product
    $V(\lambda)\otimes V(-\mu)$ for any symmetrizable
    Kac-Moody algebra $\mathfrak g$. In particular, the connected components of the crystal graph of
$U_q(g)a_{\lambda-\mu}$ are completely determined and a composition
series of $V(\lambda)\otimes V(-\mu)$ is constructed compatible with
the canonical basis when $\mathfrak g$ is of affine type and the
level of $\lambda-\mu$ is nonzero.

\section{Lusztig's Construction of Canonical Basis}

\subsection{Notations}

Let $\mathfrak g=\mathfrak g(A)$ be an arbitrary symmetrizable
Kac-Moody algebra over $\mathbb Q$ where $A$ is the $n\times n$
generalized Cartan matrix and let $\mathfrak h$ be the Cartan
subalgebra which is of dimension $2n-rank(A)$. We denote by $I=\{1,
\cdots, n\}$ the index set. Let $Q=\bigoplus_{i\in I}\mathbb Z\alpha_i$ be
the root lattice and set $Q_+=\bigoplus_{i\in I}\mathbb Z_+\alpha_i$ where
$\alpha_i$ are the simple roots. Denote by $\{h_i\in\mathfrak h\ |\
i\in I\}$ the set of simple coroots. $P^{\vee}$ is defined to be a
free $\mathbb Z$-module with a basis \[\{h_i\ |\ i\in I\}\bigcup
\{d_j\in\mathfrak h\ |\ 1\leqslant j\leqslant n-rank(A)\},\] called
the dual weight lattice. We also define $P=\{\lambda\in \mathfrak
h^{\ast}\ |\ \langle h, \lambda\rangle\in \mathbb Z\ \forall h\in
P^{\vee}\}$ to be the weight lattice. Note that there is a symmetric
bilinear form on $P$ such that $\frac{2(\alpha_i,
\lambda)}{(\alpha_i,\alpha_i)}=\langle h_i, \lambda\rangle$ for
$i\in I$, $\lambda\in P$. Let $P_+=\{\lambda\in \mathfrak
h^{\ast}|\langle h_{i},\lambda\rangle\in \mathbb Z_+\ \ \forall i\in I\
\}$ be the set of dominant weights. Denote by $\Lambda_i$ the
fundamental weight, i.e. $\langle h_i, \Lambda_j\rangle=\delta_{ij}\
\ \forall i,j\in I.$ The partial order on $P$ is defined as
$\xi\geqslant\varphi$ if
$\xi-\varphi\in Q_+$.\\
\indent The quantized enveloping algebra $U_q(\mathfrak g)$ is
defined as a $k$-algebra with generators $E_i$, $F_i$ and $q^{h}$
for all $i\in I$ and $h\in P^{\vee}$, where $k=\mathbb Q(q)$. The
relations are as in \cite{Kashiwara:1994}. Let $U_q(\mathfrak g)^+$
$(resp.$ $U_q(\mathfrak g)^-$) be the subalgebra of $U_q(\mathfrak
g)$ generated by the $E_i$ ($resp.$ $F_i$) for all $i\in I$. Note
that irreducible integrable highest and lowest weight $U_q(\mathfrak
g)$-modules can be indexed by $P_+$ and $-P_+$ respectively. Namely,
for $\lambda\in P_+$ ($resp.\
 \lambda\in -P_+)$, we denote by $V(\lambda)$ the irreducible highest
($resp.$ lowest) weight $U_q(\mathfrak g)$-module with highest
($resp.$ lowest) weight $\lambda$ and let $u_{\lambda}$ be the
highest ($resp.$ lowest) weight vector. Let $\mathcal {O}_{int}$
denote the category of integrable $U_q(\mathfrak g)$-modules $M$
which are direct sums of irreducible integrable highest weight
modules.

As is widely known, if $\mathfrak g$ is of finite type, the Weyl
group $W$ of the Lie algebra $\mathfrak g$ is a finite group and
there is a unique longest element $w_0\in W$. In this case, the
irreducible module $V(\lambda)$ is finite dimensional  and hence it
is also a  lowest weight module with the lowest weight $w_0\lambda$.

Note that $U_q(\mathfrak g)$ is a Hopf algebra and thus the tensor
product of $U_q(\mathfrak g)$-modules has a structure of
$U_q(\mathfrak g)$-module through the coproduct on $U_q(\mathfrak
g)$. There is a $\mathbb Q$-automorphism of $U_q(\mathfrak g)$, denoted
by $^-$, such that
\[\overline{q}=q^{-1},\ \overline{q^{h}}=q^{-h},\
\overline{E_i}=E_i, \ \overline{F_i}=F_i .\]

Let $\widetilde{U}_q(\mathfrak g)$ or simply
 $\widetilde{U}$ be the modified quantized enveloping algebra \cite{Kashiwara:1994}
 generated by $U_q(\mathfrak g)a_{\lambda}$
 for $\lambda\in P$ subject to the relations:
\[q^{h}a_{\lambda}=q^{\langle h, \lambda\rangle}a_{\lambda},\ a_{\lambda}a_{\mu}=\delta_{\lambda,\mu}a_{\lambda},\
ua_{\lambda}=a_{\lambda+\xi}u\ \textrm{for}\ u\in U_q(\mathfrak
g)_{\xi}\] where $U_q(\mathfrak g)_{\xi}=\{u\in U_q(\mathfrak g)\ |\
q^{h}uq^{-h}=q^{\langle h, \lambda\rangle}u \ \forall h\in P^{\vee}\}.$ Note that
$\widetilde{U}=\bigoplus_{\lambda\in P}U_q(\mathfrak g)a_{\lambda}.$

\subsection{Canonical Basis}
Canonical bases are constructed by  Lusztig for both $U_q(\mathfrak
g)^\pm$ and some kinds of $U_q(\mathfrak g)$-modules
\cite{Lusztig:1990, Lusztig2:1990, Lusztig:1991, Lusztig:1992}. This
basis was subsequently studied by M.Kashiwara \cite{Kashiwara:1990,
Kashiwara:1991, Kashiwara2:1993, Kashiwara:1994} who called it the
global crystal basis. Hereafter we will follow Lusztig's terminology
of \textit{canonical basis} while using the notations of global
crystal basis due to Kashiwara.

 For details on definition of (abstract) crystal, one can refer to
 \cite{Kashiwara:1993}. We only mention here that for $\lambda\in P_+$,
 $V(\lambda)$ admits a crystal basis $(L(\lambda), B(\lambda))$ where
$B(\lambda)=\{\tilde{f}_{i_1}\cdots\tilde{f}_{i_r}u_{\lambda}+qL(\lambda)\in
L(\lambda)/qL(\lambda)\ |\ r\geqslant 0, i_k\in I\ \}\setminus\{0\}$
and there is a similar result for lowest weight module $V(-\lambda)$
\cite{Kashiwara:1990, Kashiwara:1991}. We denote also by
$u_{\lambda}$ its image in $L(\lambda)/qL(\lambda)$ if this causes
no confusion. For a $U_q(\mathfrak g)$-module $M$, there is an
involution $^-$ on $M$ such that
\[\overline{u\cdot m}=\overline{u}\cdot\overline{m} \ \ \forall u\in
U_q(\mathfrak g),\ m\in M,\] which will be called bar involution
hereafter. Suppose that there is a balance triple ($L(M),
\overline{L(M)}, M_{\mathbb Q}$) for $M$, then we have a basis
consisting of bar-invariant elements, called \textit{canonical
basis} in this paper (see \cite{Kashiwara:1991}
 for details). It is denoted
by $\{G(b)| b\in B(M)\}$ where $(L(M), B(M))$ forms the crystal
basis of $M$.
\begin{definition}
Let $M$ and $N$ be $U_q(\mathfrak g)$-modules with canonical bases,
\begin{itemize}
 \item[(i)]
  a $U_q(\mathfrak g)$ (or $U_q(\mathfrak g)^{\pm}$)-submodule
$M^{\prime}$ of $M$ is said to be nice (or compatible with the
canonical basis of $M$) if $M^{\prime}$ is spanned as a $k$-vector
space by parts of the canonical basis of $M$.
\item[(ii)]
  a $U_q(\mathfrak g)$-morphism $f:\
M\longrightarrow N $ is said to be nice (or compatible with
canonical bases) if $f$ maps any canonical basis element of $M$ to
either zero or a canonical basis element of $N$ and if $kerf$ is
nice.
\item[(iii)]
   a filtration or a composition series of a $U_q(\mathfrak
g)$-module $M$ is said to be nice (or compatible with the canonical
basis) if any submodule in the filtration or composition series is
nice.
\end{itemize}
\end{definition}

For $\lambda\in \pm P_+$, we define the bar involution on
$V(\lambda)$ by \[\overline{x\cdot u_{\lambda}}=\overline{x}\cdot
u_{\lambda}\] for all $x\in U_q(\mathfrak g)$. As is known to all,
$V(\lambda)$ has a canonical basis $\{G(b)|b\in B(\lambda)\}$. Note
that $U_q(\mathfrak g)^\mp$ also has a canonical basis $\{G(b)|b\in
B(\pm\infty)\}$ such that $\{G(b)u_{\lambda}|b\in
B(\pm\infty)\}\backslash\{0\}$ coincides with the above set.

\subsection{Canonical Bases in Tensor Product}

For $U_q(\mathfrak g)$-modules $M$ and $N$ with bar involutions
where $M\in\mathcal {O}_{int}$, the $U_q(\mathfrak g)$-module
$M\otimes N$ can be endowed with a bar involution as
\[\overline{u\otimes v}=\Theta(\overline{u}\otimes \overline{v})\] for
all $u\in M, v\in N$, where $\Theta$ is the quasi R-matrix
\cite{Jantzen:1996}.

 We focus our attention on
$V(\lambda)\otimes V(\mu)$, where $\lambda,\ \mu\in P_+$. Since both
$V(\lambda)$ and $V(\mu)$ have canonical bases, $V(\lambda)\otimes
V(\mu)$ has a natural basis $\{G(b_1)\otimes G(b_2)| b_1\in
B(\lambda),b_2\in B(\mu) \}.$ The bar involution acts on this basis
as
\[\overline{G(b_1)\otimes G(b_2)}\in G(b_1)\otimes
G(b_2)+\sum_{wtb_1^{\prime}>wtb_1,wtb_2^{\prime}<wtb_2}\mathbb
Z[q,q^{-1}]G(b_1^{\prime})\otimes G(b_2^{\prime}).\] Thus we get a
new basis that is bar-invariant with upper triangular relations with
the above natural one.
\begin{proposition} (\cite{Lusztig:1992})
For $b_1\otimes b_2\in B(\lambda)\otimes B(\mu)$ there exists a
unique element \[(b_1\diamond b_2)_{\lambda,\mu} \in G(b_1)\otimes
G(b_2)+\sum_{wtb_1^{\prime}>wtb_1,wtb_2^{\prime}<wtb_2}q\mathbb
Z[q]G(b_1^{\prime})\otimes G(b_2^{\prime}) \] satisfying
$\overline{(b_1\diamond b_2)_{\lambda,\mu}}=(b_1\diamond
b_2)_{\lambda,\mu}$. Hence $\{(b_1\diamond b_2)_{\lambda,\mu}|
b_1\in B(\lambda),b_2\in B(\mu)\}$ forms a new basis of
$V(\lambda)\otimes V(\mu)$.
\end{proposition}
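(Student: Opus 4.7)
The plan is to apply the standard Kashiwara--Lusztig semilinear algebra lemma that extracts a bar--invariant basis from an upper--triangular action of the bar operator on a given lattice basis. The input is already laid out in the excerpt: the natural basis $\{G(b_1)\otimes G(b_2)\}$ together with the transformation formula
\[\overline{G(b_1)\otimes G(b_2)} = G(b_1)\otimes G(b_2) + \sum_{\mathrm{wt}\,b_1'>\mathrm{wt}\,b_1,\ \mathrm{wt}\,b_2'<\mathrm{wt}\,b_2} r_{(b_1',b_2'),(b_1,b_2)}\, G(b_1')\otimes G(b_2'),\]
with $r_{(b_1',b_2'),(b_1,b_2)}\in\mathbb{Z}[q,q^{-1}]$. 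Since the bar operator on $V(\lambda)\otimes V(\mu)$ preserves total weight (because the quasi $R$--matrix $\Theta$ is a sum of components in $U_q(\mathfrak{g})^-_{-\xi}\otimes U_q(\mathfrak{g})^+_{\xi}$, while bar on each $V(\lambda),V(\mu)$ preserves weight), I would first fix a total weight $\nu\in P$; the set $\{(b_1,b_2):\mathrm{wt}\,b_1+\mathrm{wt}\,b_2=\nu\}$ is finite by integrability of $V(\lambda)$ and $V(\mu)$. On this finite set define $(b_1',b_2')\prec(b_1,b_2)$ iff $\mathrm{wt}\,b_1'>\mathrm{wt}\,b_1$; the displayed formula then reads $\overline{G(b_1)\otimes G(b_2)}= G(b_1)\otimes G(b_2)$ plus a sum over strictly smaller pairs.

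Next I would carry out a downward induction on $\prec$. For a minimal pair the sum above is empty, so $G(b_1)\otimes G(b_2)$ is already bar--invariant and I set $(b_1\diamond b_2)_{\lambda,\mu}=G(b_1)\otimes G(b_2)$. For the inductive step, assume $(b_1'\diamond b_2')_{\lambda,\mu}$ has been constructed for all smaller pairs; these share the same $\mathbb{Z}[q,q^{-1}]$--span as the corresponding $G(b_1')\otimes G(b_2')$, so we may rewrite the known bar expansion in terms of them:
\[\overline{G(b_1)\otimes G(b_2)} = G(b_1)\otimes G(b_2) + \sum_{(b_1',b_2')\prec(b_1,b_2)} \rho_{b_1',b_2'}(q)\,(b_1'\diamond b_2')_{\lambda,\mu},\]
with $\rho\in\mathbb{Z}[q,q^{-1}]$. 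Applying the bar operator once more and using $\overline{(b_1'\diamond b_2')_{\lambda,\mu}}=(b_1'\diamond b_2')_{\lambda,\mu}$ gives the identity $\rho(q)+\rho(q^{-1})=0$, so in particular $\rho$ has zero constant term. I would then look for
\[(b_1\diamond b_2)_{\lambda,\mu}=G(b_1)\otimes G(b_2)+\sum_{(b_1',b_2')\prec(b_1,b_2)} \beta_{b_1',b_2'}(q)\,(b_1'\diamond b_2')_{\lambda,\mu}\]
with $\beta\in q\mathbb{Z}[q]$; the bar--invariance requirement collapses, for each $(b_1',b_2')$, to the single equation $\beta(q)-\beta(q^{-1})=\rho(q)$. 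Using the decomposition $\mathbb{Z}[q,q^{-1}]=q\mathbb{Z}[q]\oplus\mathbb{Z}\oplus q^{-1}\mathbb{Z}[q^{-1}]$, the antisymmetric $\rho$ admits a unique decomposition as $\beta(q)-\beta(q^{-1})$ with $\beta\in q\mathbb{Z}[q]$ (take $\beta$ to be the $q\mathbb{Z}[q]$--part of $\rho$). This yields existence, and uniqueness of the full element $(b_1\diamond b_2)_{\lambda,\mu}$ follows because the difference of two candidates would be a bar--invariant element of $\sum q\mathbb{Z}[q]\,(b_1'\diamond b_2')_{\lambda,\mu}$, which a short induction on $\prec$ forces to be zero.

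Once every pair has been processed, the $(b_1\diamond b_2)_{\lambda,\mu}$ form a basis of $V(\lambda)\otimes V(\mu)$ by the triangular change of basis from $\{G(b_1)\otimes G(b_2)\}$. In fact there is no serious obstacle in this proof: the substantive ingredient is the upper--triangular form of the bar operator, which the excerpt has already supplied from $\Theta$. The only things I must verify independently are that the partial order $\prec$ has finite downward sets and that bar preserves total weight, both of which are immediate from integrability and the weight grading of $\Theta$; the rest is the standard Kashiwara--Lusztig semilinear extraction applied in this setting.
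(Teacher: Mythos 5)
Your proof is correct and is essentially the argument the paper relies on: the paper itself only records the upper--triangular action of the bar involution on $\{G(b_1)\otimes G(b_2)\}$ and cites Lusztig, and your weight-by-weight triangular induction with the $q\mathbb{Z}[q]$--extraction of an antisymmetric $\rho$ is exactly the standard Lusztig-type lemma behind that citation. The finiteness you invoke is really finiteness of weight spaces together with the weights of the highest weight modules $V(\lambda)$, $V(\mu)$ being bounded above (not integrability alone), but in this setting that is immediate and the rest of your argument goes through as written.
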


Note that $V(\lambda)\otimes V(\mu)$ has a crystal basis
$(L(\lambda)\otimes L(\mu)$, $B(\lambda)\otimes B(\mu))$ and for
$b_1\otimes b_2\in B(\lambda)\otimes B(\mu)$, the corresponding
canonical basis element $G(b_1\otimes b_2)=(b_1\diamond
b_2)_{\lambda,\mu}$. In particular, $G(b_1\otimes b_2)=G(b_1)\otimes
G(b_2)$ if $b_1=u_{\lambda}$. This basis is constructed in the same
fashion as that of Lusztig's canonical basis of $V(\lambda)\otimes
V(-\mu)$ \cite{Lusztig:1992}. When $\mathfrak g$ is of finite
type, our basis coincides with Lusztig's basis for
$V(\lambda)\otimes V(w_0\mu)$ since the $U_q(\mathfrak g)$-morphism
$f:V(\mu)\longrightarrow V(w_0\mu)$ which takes $u_{\mu}$ to the
canonical basis element of hight weight in $V(w_0\mu)$ is easily
seen to be a nice isomorphism. Therefore $V(\lambda)\otimes V(-\mu)$
is a special case in our consideration for $\mathfrak g$ of finite
type but things are quite different in affine or indefinite types
since this tensor product is not in category $\mathcal O_{int}$ any
more. As is known $V(\lambda)\otimes V(-\mu)$ is a cyclic
$U_q(\mathfrak g)$-module generated by $u_{\lambda}\otimes
u_{-\mu}$. We mention here a result of Lusztig's (Theorem 2 in
\cite{Lusztig:1992}) on the stability property for the canonical
basis of this tensor product, which is actually true for $\mathfrak
g$ of any type.
\begin{proposition}\label{stability}
 For any $\lambda,\mu,\theta\in
P_+$, the $U_q(\mathfrak g)$-morphism \[\phi:
V(\lambda+\theta)\otimes V(-\theta-\mu) \longrightarrow
V(\lambda)\otimes V(-\mu)\] which takes $u_{\lambda+\theta}\otimes
u_{-\theta-\mu}$ to $u_{\lambda}\otimes u_{-\mu}$ is a surjective
nice $U_q(\mathfrak g)$-morphism.
\end{proposition}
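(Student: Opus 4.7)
The plan is to establish existence of $\phi$ as a $U_q(\mathfrak g)$-morphism, surjectivity, and niceness, in that order. Surjectivity is automatic once $\phi$ is constructed, since the text already notes that $V(\lambda)\otimes V(-\mu)$ is cyclic on $u_\lambda\otimes u_{-\mu}$. For existence, I would realize both tensor products as cyclic quotients of $\widetilde{U}\cdot a_{\lambda-\mu}$ via the surjections sending $a_{\lambda-\mu}$ to the respective extreme tensor, and verify the inclusion of left ideals
$\mathrm{Ann}(u_{\lambda+\theta}\otimes u_{-\theta-\mu})\subseteq\mathrm{Ann}(u_\lambda\otimes u_{-\mu})$.
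The essential input is that replacing $(\lambda,\mu)$ by $(\lambda+\theta,\mu+\theta)$ only increases the integrability exponents $\langle h_i,\lambda\rangle+1$ and $\langle h_i,\mu\rangle+1$ appearing in the defining relations of the two factors, so the annihilator of the extreme tensor can only shrink, giving the required inclusion.

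For niceness, I would appeal to Lusztig's explicit construction of the canonical basis of $V(\lambda)\otimes V(-\mu)$ from \cite{Lusztig:1992}, which runs parallel to Proposition~2.4 above. That basis is indexed by pairs $(b^-,b^+)\in B(-\infty)\times B(\infty)$, each contributing an element $[b^-,b^+]_{\lambda,\mu}$ uniquely characterized by bar-invariance (with respect to a bar involution tailored so that the cyclic generator $u_\lambda\otimes u_{-\mu}$ is fixed) together with an upper-triangular expansion in the ``naive'' basis $\{G(b^-)u_\lambda\otimes G(b^+)u_{-\mu}\}$; the element vanishes precisely when this naive tensor vanishes, i.e., when the crystal datum is incompatible with the weights. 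Since enlarging $(\lambda,\mu)$ to $(\lambda+\theta,\mu+\theta)$ only enlarges the set of nonvanishing pairs, one may \emph{define} a linear map $\phi'$ on the canonical basis by sending $[b^-,b^+]_{\lambda+\theta,\theta+\mu}$ to $[b^-,b^+]_{\lambda,\mu}$ when nonzero and to zero otherwise. Niceness of $\phi$ then follows once we identify $\phi$ with $\phi'$, as the kernel is by construction spanned by a subset of the canonical basis.

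The main obstacle is the identification $\phi=\phi'$, which reduces to showing that $\phi$ intertwines the two bar involutions. Granted this, $\phi([b^-,b^+]_{\lambda+\theta,\theta+\mu})$ is bar-invariant in the target, and its expansion in the naive basis is obtained by applying $\phi$ term by term; since $\phi$ carries $G(b^-)u_{\lambda+\theta}\otimes G(b^+)u_{-\theta-\mu}$ to $G(b^-)u_\lambda\otimes G(b^+)u_{-\mu}$ or to zero, the uniqueness characterization forces the image to equal $[b^-,b^+]_{\lambda,\mu}$ or zero. The bar-intertwining itself follows from the characterization of each tensor-product bar involution (under Lusztig's convention) as the unique bar-semilinear involution compatible with the $U_q(\mathfrak g)$-action and fixing the cyclic generator $u_\bullet\otimes u_{-\bullet}$: since $\phi$ is $U_q(\mathfrak g)$-linear and matches these generators, it automatically intertwines the two involutions.
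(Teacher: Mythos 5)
Your skeleton (construct $\phi$, then identify it with the basis-to-basis map via bar-invariance plus triangularity and the uniqueness characterization) is the right one, and two of its three pieces are sound: surjectivity is immediate from cyclicity, and the bar-intertwining argument is correct, since both cyclic generators are fixed by their bar involutions and $\phi$ is $U_q(\mathfrak g)$-linear on a cyclic module. For existence, note that ``the integrability exponents only increase'' does not by itself give the inclusion of annihilators: you need to know that the annihilator of $u_{\lambda+\theta}\otimes u_{-\theta-\mu}$ in $U_q(\mathfrak g)a_{\lambda-\mu}$ is \emph{generated} by the divided-power relations $E_i^{(\langle h_i,\lambda+\theta\rangle+1)}a_{\lambda-\mu}$, $F_i^{(\langle h_i,\mu+\theta\rangle+1)}a_{\lambda-\mu}$, a nontrivial presentation theorem; alternatively one can build $\phi$ directly by composing $V(\lambda+\theta)\to V(\lambda)\otimes V(\theta)$ and $V(-\theta-\mu)\to V(-\theta)\otimes V(-\mu)$ with the contraction $V(\theta)\otimes V(-\theta)\to k$, which is how Lusztig proceeds.

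The genuine gap is the step you rely on for niceness: the claim that $\phi$ sends each naive basis vector $G(b^-)u_{\lambda+\theta}\otimes G(b^+)u_{-\theta-\mu}$ to $G(b^-)u_{\lambda}\otimes G(b^+)u_{-\mu}$ or to zero. This is false. Already for $\mathfrak g=sl_2$, $\lambda=\mu=\theta=\omega$ the fundamental weight, and the coproduct $\Delta(E)=E\otimes t^{-1}+1\otimes E$, $\Delta(F)=F\otimes 1+t\otimes F$, one computes $Fu_{2\omega}\otimes Eu_{-2\omega}=EF(u_{2\omega}\otimes u_{-2\omega})-q^{2}[2](u_{2\omega}\otimes u_{-2\omega})$, whence $\phi(Fu_{2\omega}\otimes Eu_{-2\omega})=Fu_{\omega}\otimes Eu_{-\omega}-q^{3}\,u_{\omega}\otimes u_{-\omega}$: a nonzero correction appears (here $F$, $E$ are canonical basis elements of $U_q(\mathfrak g)^{\mp}$, so these are genuine naive basis vectors on both sides). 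What is actually true, and what must be proved, is the weaker statement that $\phi(G(b))$ lies in the corresponding naive vector of the target plus $q\mathbb Z[q]$-combinations of higher naive vectors; only then does bar-invariance together with the uniqueness characterization force $\phi(G(b))$ to be a canonical basis element or zero. That integrality/triangularity statement is precisely the content of Theorem 2 of \cite{Lusztig:1992} (proved via based modules and the contraction map in Chapters 24--25 of \cite{Lusztig:1993}), and it is why the present paper quotes the result rather than reproving it: your argument assumes the hard step rather than establishing it.
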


We can get some submodules of $V(\lambda)\otimes V(-\mu)$ compatible
with the canonical basis of $V(\lambda)\otimes V(-\mu)$ by means of
the above maps, but usually one cannot get a composition series
consisting of the nice submodules obtained above.

\begin{example}\label{nocom}
 In $A_2$ case, consider $V(\Lambda_1)\otimes
V(-\Lambda_1-\Lambda_2)$. Since we have \[V(\Lambda_1)\otimes
V(-\Lambda_1-\Lambda_2) \xrightarrow{\ \ \phi\ \ } V(0)\otimes
V(-\Lambda_2)\cong V(-\Lambda_2)\] then $V(\Lambda_1)\otimes
V(-\Lambda_1-\Lambda_2)\supseteq ker\phi \supseteq 0$ is a
filtration compatible with the canonical basis, but $ker\phi$ is far
from being an irreducible module.
\end{example}

We denote by $B(\lambda,-\mu)$ the crystal basis of
$V(\lambda)\otimes V(-\mu)$. It can be seen from
Proposition~\ref{stability} that there is an embedding of crystals
$B(\lambda,-\mu)\hookrightarrow B(\lambda+\theta,-\theta-\mu)$ and
note that it is strict. For $\lambda,\mu\in P_+$, let $\Phi:
U_q(\mathfrak g)a_{\lambda-\mu}\longrightarrow V(\lambda,-\mu)$ be
the $U_q(\mathfrak g)$-map taking $a_{\lambda-\mu}$ to
$u_{\lambda}\otimes u_{-\mu}$. It is known that $\widetilde{U}$ as
well as each $U_q(\mathfrak g)a_{\lambda}$ have canonical bases and
$\Phi$ is a nice surjective $U_q(\mathfrak g)$-map. We denote the
crystal basis of $\widetilde{U}$ ($resp$. $U_q(\mathfrak
g)a_{\lambda}$) by $\widetilde{B}$ ($resp$. $B(U_q(\mathfrak
g)a_{\lambda})$). Hence we have an embedding of crystals
$B(\lambda,-\mu)\hookrightarrow
 B(U_q(\mathfrak g)a_{\lambda-\mu}).$ It can be viewed as
 \[B(\lambda,-\mu)\subseteq B(\lambda+\theta,-\theta-\mu)
 \subseteq B(U_q(\mathfrak g)a_{\lambda-\mu})
 \subseteq \widetilde{B}.\] Note that \[B(U_q(\mathfrak g)a_{\lambda})\cong B(\infty)
 \otimes T_{\lambda}\otimes B(-\infty)\] where $T_{\lambda}$ is a crystal
 consisting of a single element $t_{\lambda}$ with
 $\varepsilon_i(t_{\lambda})=\varphi_i(t_{\lambda})=-\infty$
 for all $i\in I$.
 For $b\in B(\lambda,-\mu)\subseteq\widetilde{B}$, we denote
 the corresponding canonical basis element in $V(\lambda,-\mu)$ or $\tilde{U}$
 by the same $G(b)$ if there is no confusion.

\section{Composition Series of $V(\lambda)\otimes V(\mu)$}

\subsection{Kashiwara's Lemma}

We fix $\lambda, \mu\in P_+$ hereafter. In \cite{Lusztig:1992},
Lusztig conjectured that there exists a nice composition series of
$V(\lambda)\otimes V(-\mu)$ if $\mathfrak g$ is of finite type. One
may extend this conjecture by changing $V(-\mu)$ to $V(\mu)$ and
omitting the assumption that $\mathfrak g$ is of finite type. This
section is devoted to the proof of this extended Lusztig's
conjecture. In order to do that, we need the following lemma due to
Kashiwara \cite{Kashiwara:1993} who proved the lemma in case of
$g=sl_2$ and claimed that it is true in general.
\begin{lemma}\label{kashiwaralemma}
(\cite{Kashiwara:1993})  Let $M$ be an integrable $U_q(\mathfrak
g)$-module with a canonical basis. If $N$ is a nice $U_q(\mathfrak
g)^+$-submodule of $M$, then $U_q(\mathfrak g)N$ is a nice
$U_q(\mathfrak g)$-submodule of $M$, i.e. $U_q(\mathfrak
g)N=\bigoplus_{b\in B(U_q(\mathfrak g)N)\subseteq B(M)} kG(b).$
Moreover, $B(U_q(\mathfrak g)N)=\{
\tilde{f}_{i_1}\cdots\tilde{f}_{i_m}b\ |\ m\geqslant 0,
i_1,\cdots,i_m\in I,b\in B(N) \}\setminus \{0\}$.
\end{lemma}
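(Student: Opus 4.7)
The plan is to reduce to a rank-one statement and then iterate over the simple roots. Write $N = \bigoplus_{b \in B(N)} kG(b)$ for the given $B(N) \subseteq B(M)$. Since $N$ is already $U_q(\mathfrak g)^+$-stable and preserved by the Cartan, the triangular decomposition gives $U_q(\mathfrak g)N = U_q(\mathfrak g)^- N$, so the task is to control the iterated action of the $F_i$'s on the $G(b)$'s, $b\in B(N)$.

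First, I would establish the rank-one case: for each $i\in I$, regarding $M$ as a module over $U_q(sl_2)_i := \langle E_i, F_i, K_i^{\pm 1}\rangle$, if $N'\subseteq M$ is a nice $U_q(sl_2)_i^+$-submodule, then $U_q(sl_2)_i N'$ is also nice in $M$, with crystal basis $\{\tilde f_i^k b : k\geqslant 0,\, b\in B(N')\}\setminus\{0\}$. This is essentially the $sl_2$ case proved by Kashiwara. The key input is the compatibility of the canonical basis with $i$-strings: for $b\in B(M)$ one has $F_i G(b) \equiv [\varphi_i(b)+1]_i G(\tilde f_i b) \pmod{qL(M)}$ along the $i$-string, and combining the bar-invariance of $G(\tilde f_i b)$ with the upper-triangular relation between the canonical basis and the tensor/PBW-type basis forces the corrections to sit inside the span of $G(b'')$ with $b''$ reachable from $B(N')$ by repeated $\tilde f_i$.

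Second, I would iterate. Set $N_0 = N$ and inductively $N_{k+1} = U_q(sl_2)_{i_{k+1}} N_k$ for a fixed enumeration of generators, or more symmetrically define
\[ N_\infty := \sum_{m\geqslant 0,\ (i_1,\ldots,i_m)\in I^m} F_{i_1}\cdots F_{i_m} N. \]
By the rank-one step and induction on $k$, each $N_k$ is nice in $M$ with crystal basis obtained by appending strings of $\tilde f_i$'s to $B(N)$. The limit $N_\infty$ is $F_i$-stable for every $i$, hence equals $U_q(\mathfrak g)^- N = U_q(\mathfrak g)N$, and its crystal basis is exactly the set $B'$ in the statement. Finally, $B(U_q(\mathfrak g)N) \subseteq B'$ is visible from the construction, and conversely every nonzero $\tilde f_{i_1}\cdots\tilde f_{i_m} b$ enters $B(N_m)$ at stage $m$.

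The main obstacle is the rank-one step. The $i$-strings of the canonical basis are not literally $U_q(sl_2)_i$-submodules: $F_i G(b)$ equals $[\varphi_i(b)+1]_i G(\tilde f_i b)$ only modulo terms in $qL(M)$, and those corrections a priori involve canonical basis elements $G(b'')$ from other $i$-strings. The delicate point is to argue that the corrections cannot escape the nice subspace one is trying to construct. The natural route is a downward induction on $\langle h_i, \mathrm{wt}\, b\rangle$ along each $i$-string, using the bar-invariance of the $G(b)$'s together with the fact that $F_i$ commutes with the bar involution on the $U_q(sl_2)_i$-part (so that correction coefficients in $q\mathbb Z[q]$ are forced to cancel outside the nice span).
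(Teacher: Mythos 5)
Your overall architecture (prove a rank-one statement, then iterate over the simple roots) is the same as the paper's, but the two places where the real work happens are exactly the places your sketch is either wrong or silent. First, the rank-one step. The control you invoke is not the right one: the corrections to $F_iG(b)$ are not confined to canonical basis elements reachable from $B(N')$ by powers of $\tilde f_i$, and working modulo $qL(M)$ does not see where they live. Kashiwara's actual statement is that for $b$ on an $i$-string of length $l$, $F_i^{(k)}G(b)$ equals a $q$-binomial multiple of $G(\tilde f_i^{\,k}b)$ \emph{modulo} $W^{l+1}(M)$, the (nice) sum of isotypical components of string length $>l$; the error terms are canonical basis elements on \emph{longer} $i$-strings of the \emph{same weight} as $\tilde f_i^{\,k}b$. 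Consequently your proposed downward induction on $\langle h_i,\mathrm{wt}\,b\rangle$ cannot order the error terms at all — the correct induction is a descending induction on the string length $\varepsilon_i(b)+\varphi_i(b)$ through the filtration $W^l(M)$, proving $W^l(U_q(sl_2)N)=\bigoplus_{b\in W^l(B(U_q(sl_2)N))}kG(b)$ from large $l$ downward. Moreover, that descending induction only has a starting point when string lengths are bounded; the modules this lemma is applied to ($V(\lambda)\otimes V(\mu)$, $V(\lambda)\otimes V(-\mu)$) are infinite dimensional with unbounded strings, and the paper spends two lemmas on precisely this: pass to the truncated quotients $M/W^l(M)$, apply the bounded case there, and recover $U_q(sl_2)N$ as $\bigcap_{l}\bigl(U_q(sl_2)N+W^l(M)\bigr)$, identifying the crystal basis by intersecting with each $I^l(B)$. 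Your proposal treats the rank-one case as "essentially Kashiwara" plus a bar-invariance cancellation, which does not supply either the string-length filtration or the truncation argument.

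Second, the iteration. To apply the rank-one lemma at stage $k+1$ you must know that $N_k=U_q(sl_2(i_k))\cdots U_q(sl_2(i_1))N$ is a nice $U_q(sl_2(i_{k+1}))^{+}$-submodule, i.e.\ stable under $E_{i_{k+1}}$; this is not automatic from "nice plus generated by $F$'s" and your induction never checks it. The paper's fix is the commutation $U_q(\mathfrak g)^{+}U_q(sl_2(i))=U_q(sl_2(i))U_q(\mathfrak g)^{+}$, which shows that $U_q(sl_2(i))N$ is again a $U_q(\mathfrak g)^{+}$-module (hence $E_j$-stable for every $j$), so the process can be repeated and one gets crystal bases of the form $\tilde f_{i_1}^{r_1}\cdots\tilde f_{i_m}^{r_m}b$; summing over all finite sequences then gives $U_q(\mathfrak g)N$ and the stated $B(U_q(\mathfrak g)N)$. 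Without this stability observation your inductive step, and hence the identification of the limit with $U_q(\mathfrak g)N$ together with its crystal basis, is unjustified.
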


For completeness, we give a full proof of Kashiwara's lemma. First
assume that $M$ is a finite dimensional $U_q(sl_2)$-module with
canonical basis and we denote by $B(M)$ or $B$ for simplicity the
crystal basis of $M$. As is defined by M. Kashiwara in
\cite{Kashiwara:1993}, $I^{l}(M)$ is the sum of all
$l+1$-dimensional irreducible submodules of $M$. Hence $M=\bigoplus _l
I^l(M).$ Set $I^l(B)=\{b\in B| \varepsilon(b)+\varphi(b)=l\}$ and one can
see that $B=\bigoplus _l I^l(B)$ where $\bigoplus$ here simply means a
union. Note that the decomposition of $M$ into isotypical components
$I^l(M)$'s is compatible with the decomposition of crystal basis $B$
into $I^l(B)$'s, but it is usually not compatible with the canonical
 basis. Set $W^l(M)=\bigoplus_{l^{\prime}\geqslant l}I^{l^{\prime}}(M)$
and $W^l(B)=\{b\in B| \varepsilon(b)+\varphi(b)\geqslant l\}.$ We know
from \cite{Kashiwara:1993} that $W^l(M)$ is a nice
$U_q(sl_2)$-submodule of $M$, i.e. $W^l(M)=\bigoplus_{b\in W^l(B)}
kG(b).$ Moreover, if $b\in I^l(B)$, then
\[F_i^{(k)}G(b)={\atopwithdelims{\varepsilon_i(b)+k}{k}}_iG(\tilde{f}_i^kb)\ \ \ \ mod\ W^{l+1}(M),\]
\[E_i^{(k)}G(b)={\atopwithdelims{\varphi_i(b)+k}{k}}_iG(\tilde{e}_i^kb)\ \ \ \ mod\ W^{l+1}(M).\]
Let $N$ be a nice $U_q(sl_2)^+$-submodule of $M$, i.e.
$N=\bigoplus_{b\in B(N)\subseteq B(M)}kG(b).$ Set
$\widetilde{N}=U_q(sl_2)N$, $I^l(B(N))=B(N)\bigcap I^l(B)$,
$W^l(B(N))=\bigcup_{k\geqslant l}I^k(B(N))$, $W^l(N)=W^l(M)\bigcap
N$ and $B(\widetilde{N})=\bigcup_{m\geqslant 0}
\tilde{f}^mB(N)\setminus \{0\}$. We have the following lemma.
\begin{lemma} (\cite{Kashiwara:1993}) \label{sl_2}For $N$, $W^{l}(N)$,
$\widetilde{N}$, $B(\widetilde{N})$ defined as above,
\begin{itemize}
\item[(i)]
  $\tilde{e}_iB(N)\subseteq B(N)\bigcup \{0\}.$
\item[(ii)]
$W^l(N)=\bigoplus_{b\in W^l(B(N))}kG(b).$
\item[(iii)]
$W^l(\widetilde{N})=U_q(sl_2)W^l(N).$
\item[(iv)]
$\widetilde{N}=\bigoplus_{b\in B(\widetilde{N})\subseteq B(M)}
kG(b).$\end{itemize}
\end{lemma}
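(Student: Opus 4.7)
The plan is to prove (i) and (ii) individually, then establish (iii) and (iv) together by descending induction on $l$.

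For (i), take $b\in B(N)\cap I^l(B)$. The $U_q(sl_2)^+$-stability of $N$ yields $E_iG(b)\in N$, while the cited action formula gives $E_iG(b)\equiv [\varphi_i(b)+1]_i G(\tilde{e}_i b)\pmod{W^{l+1}(M)}$. If $\tilde{e}_i b\neq 0$, the element $G(\tilde{e}_i b)$ lies at level $l$ and so appears in the canonical basis expansion of $E_iG(b)$ with the nonzero coefficient $[\varphi_i(b)+1]_i$, independently of the correction living in $W^{l+1}(M)$. Niceness of $N$ then forces $\tilde{e}_i b\in B(N)$. Claim (ii) is immediate: since $N$ and $W^l(M)$ are both nice, their intersection $W^l(N)=N\cap W^l(M)$ is spanned exactly by the canonical basis elements indexed by $B(N)\cap W^l(B)=W^l(B(N))$.

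For (iii) and (iv), I proceed by descending induction on $l$, with trivial base case once $l$ exceeds $\max_{b\in B(M)}(\varepsilon(b)+\varphi(b))$. One direction, $U_q(sl_2)W^l(N)\subseteq W^l(\widetilde{N})$, is automatic because $W^l(M)$ is a $U_q(sl_2)$-submodule of $M$. For the reverse I use that $\widetilde{N}$ is a $U_q(sl_2)$-submodule of the finite-dimensional integrable module $M$, hence decomposes into isotypic components aligned with those of $M$, giving $W^l(\widetilde{N})=I^l(\widetilde{N})\oplus W^{l+1}(\widetilde{N})$. The inductive hypothesis identifies $W^{l+1}(\widetilde{N})$ with $U_q(sl_2)W^{l+1}(N)\subseteq U_q(sl_2)W^l(N)$, reducing the task to controlling $I^l(\widetilde{N})$. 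Passing to the quotient $I^l(M)=W^l(M)/W^{l+1}(M)$, the displayed action formulas say that the classes $\bar{G}(b)$ for $b\in I^l(B)$ transform under $U_q(sl_2)$ exactly as the crystal $I^l(B)$; consequently the $U_q(sl_2)$-closure of the image of $W^l(N)$ in $I^l(M)$ is spanned by $\bar{G}(b)$ for $b$ in the $\tilde{f}$-closure of $I^l(B(N))$, which by definition is $I^l(B(\widetilde{N}))$. Identifying this span with $I^l(\widetilde{N})$ and lifting back to $W^l(\widetilde{N})$ via the split completes the inductive step and, at $l=0$, gives (iii) and (iv).

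The main obstacle I anticipate is the identification of $U_q(sl_2)\pi_l(W^l(N))$ with $I^l(\widetilde{N})$ inside $I^l(M)$, where $\pi_l$ denotes the projection onto $I^l(M)$. One inclusion is clear; the other requires showing that any isotypic contribution to $I^l(\widetilde{N})$ produced by projecting a lower-level canonical basis element $G(b')\in N$ with $l_{b'}<l$ already lies in $U_q(sl_2)\pi_l(W^l(N))$. Such contributions come from the upper-triangular corrections intrinsic to the canonical basis of $M$, and one handles them by expressing $G(b')$ as the appropriate divided power $F_i^{(k)}G(b^{(0)})$ of the top $G(b^{(0)})$ of the string containing $b'$ (which lies in $N$ by (i)) modulo an element of $W^{l_{b'}+1}(\widetilde{N})$ that is already controlled by the induction. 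This iterative peeling-off of isotypic layers is the technical heart of Kashiwara's argument.
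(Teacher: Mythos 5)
Your (i) and (ii) are correct, and your overall strategy for (iii)--(iv) (descending induction on $l$) is the same one the paper attributes to Kashiwara. The problem is that the inductive step, as you describe it, breaks down exactly where the lemma is hard. Passing to $I^l(M)\cong W^l(M)/W^{l+1}(M)$ and identifying the $U_q(sl_2)$-closure of the image of $W^l(N)$ with $\bigoplus_{c\in I^l(B(\widetilde N))}k\overline{G}(c)$ only produces, for each such $c$, an element $G(c)+w_c\in\widetilde N$ with $w_c\in W^{l+1}(M)$ (namely the correction term in $F^{(k)}G(b)=\genfrac{[}{]}{0pt}{}{\varepsilon(b)+k}{k}G(\tilde f^k b)+w_c$ for $b\in I^l(B(N))$). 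It does not give $G(c)\in\widetilde N$. Since, as the paper itself stresses, the canonical basis is \emph{not} compatible with the isotypic decomposition, the lift of $\overline{G}(c)$ inside $I^l(\widetilde N)$ is the isotypic projection of $G(c)$, not $G(c)$; so ``lifting back via the split'' yields at best (iii), i.e. $W^l(\widetilde N)=U_q(sl_2)W^l(N)$, but not the compatibility statement (iv), which requires showing $w_c\in W^{l+1}(\widetilde N)$, i.e. that the higher-level tail of $F^{(k)}G(b)$ is supported on $W^{l+1}(B(\widetilde N))$. That is the technical heart of Kashiwara's lemma, it needs finer information about the global-basis expansion of $F^{(k)}G(b)$ than the two congruences mod $W^{l+1}(M)$ you use, and nothing in your argument addresses it. Note also that (iv) in this sharp form is precisely what the paper needs later (Lemmas on truncated and infinite-dimensional $U_q(sl_2)$-modules), so it cannot be waved away.

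Moreover, the patch you offer for your acknowledged ``main obstacle'' is circular. You appeal to ``an element of $W^{l_{b'}+1}(\widetilde N)$ that is already controlled by the induction'' with $l_{b'}<l$; but a \emph{descending} induction at step $l$ only controls $W^{m}(\widetilde N)$ for $m\geqslant l+1$, whereas $W^{l_{b'}+1}(\widetilde N)\supseteq W^{l}(\widetilde N)$ is exactly the kind of space you are in the middle of analyzing. Rewriting $G(b')$ as $F^{(k)}G(b^{(0)})$ with $b^{(0)}=\tilde e^{\varepsilon(b')}b'$ also gains nothing: $b^{(0)}$ sits at the same level $l_{b'}$, so the problematic contributions to $I^l(M)$ simply reappear through the projection of $G(b^{(0)})$. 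Finally, be aware that the paper does not reprove this lemma: it cites it from Kashiwara and only recalls the shape of the induction in order to extend it to truncated and then general integrable $U_q(sl_2)$-modules; a genuinely self-contained proof would have to supply the missing control of the correction terms, which your outline does not.
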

\begin{definition}
An integrable $U_q(sl_2)$-module $M$ is said to be truncated if
$M=\bigoplus_{j\geqslant 0}I^{j}(M)$ where there exists an $l\geqslant
0$ such that $I^{j}(M)=0$ for all $j\geqslant l$.
\end{definition}
 Recall that Lemma~\ref{sl_2} (\romannumeral 4) is proved by
showing  \[W^l(\widetilde{N})=\bigoplus_{b\in
W^l(B(\widetilde{N}))}kG(b)\] through a descending induction
 on $l$ since both of the two sides equal zero
 when $l$ is sufficiently large. Thus the above results also hold when we modify
 $M$ to be a truncated integrable $U_q(sl_2)$-module, that is,
\begin{lemma} \label{truncated}
Let $M$ be a truncated integrable $U_q(sl_2)$-module with a
canonical basis. If $N$ is a nice $U_q(sl_2)^+$-submodule of $M$,
then $U_q(sl_2)N$ is a nice $U_q(sl_2)$-submodule of $M$, i.e.
$U_q(sl_2)N=\bigoplus_{b\in B(U_q(sl_2)N)\subseteq B(M)} kG(b).$
Moreover, $B(U_q(sl_2)N)=\bigcup_{m\geqslant 0}
\tilde{f}^mB(N)\setminus \{0\}$.
\end{lemma}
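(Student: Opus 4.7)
The plan is to replay the proof of Lemma~\ref{sl_2} under the weaker hypothesis that $M$ is truncated rather than finite dimensional, and to verify that the only place where finite dimensionality entered Kashiwara's argument --- anchoring a descending induction on $l$ --- is supplied in exactly the same way by the truncation hypothesis. Concretely, I would reprove items (i)--(iv) of Lemma~\ref{sl_2} with $M$ truncated and then read off Lemma~\ref{truncated} as item (iv).

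Items (i)--(iii) transfer verbatim. Item (i), that $\tilde{e}_iB(N)\subseteq B(N)\cup\{0\}$, uses only that $N$ is nice and $U_q(sl_2)^+$-stable. Item (ii), that $W^l(N)=\bigoplus_{b\in W^l(B(N))}kG(b)$, follows because $W^l(N)=W^l(M)\cap N$ is the intersection of two nice submodules of $M$ (the niceness of $W^l(M)$ being Kashiwara's original input, which is established componentwise in $l$). Item (iii), the identity $W^l(\widetilde N)=U_q(sl_2)W^l(N)$, is a purely representation-theoretic statement that uses only the isotypical decomposition $M=\bigoplus_j I^j(M)$, which the truncation hypothesis preserves. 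The substance lies in item (iv), obtained by descending induction on $l$: granting $W^{l+1}(\widetilde N)=\bigoplus_{b\in W^{l+1}(B(\widetilde N))}kG(b)$, one deduces the analogous statement at level $l$ by combining (iii) with the congruences for $F_i^{(k)}G(b)$ and $E_i^{(k)}G(b)$ modulo $W^{l+1}(M)$ recalled before Lemma~\ref{sl_2}. The base case is exactly where the truncation hypothesis is used: choose $l_0$ with $I^j(M)=0$ for $j\geqslant l_0$, so that $W^{l_0}(M)=0$ forces $W^{l_0}(\widetilde N)=0$ and $W^{l_0}(B(\widetilde N))=\emptyset$, which anchors the induction.

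The single point that warrants a second glance, and the only place I expect any subtlety, is whether the infinite dimensionality of an individual isotypical component $I^l(M)$ --- now permitted, since truncation bounds only the spread of irreducible components and not their multiplicities --- causes any step of Kashiwara's argument inside a fixed~$l$ to break. It does not: the niceness of $W^l(M)$ in $M$, the action congruences modulo $W^{l+1}(M)$, and the identification in (iii) are all established elementwise on each $G(b)$, so the multiplicity space attached to a given $l$ never enters the argument. With the base case in hand, the induction closes for every $l$, and item (iv) then yields both assertions of Lemma~\ref{truncated}: that $U_q(sl_2)N$ is nice in $M$, and that its crystal basis equals $\bigcup_{m\geqslant 0}\tilde f^m B(N)\setminus\{0\}$.
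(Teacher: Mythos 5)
Your proposal matches the paper's own treatment: the paper proves Lemma~\ref{truncated} simply by observing that Kashiwara's descending induction on $l$ establishing $W^l(\widetilde N)=\bigoplus_{b\in W^l(B(\widetilde N))}kG(b)$ only needs both sides to vanish for $l$ sufficiently large, which the truncation hypothesis supplies in place of finite dimensionality. Your extra check that possibly infinite multiplicities within a fixed isotypical component cause no harm is a sensible elaboration of the same argument, not a different route.
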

Furthermore, we can prove the following lemma.
\begin{lemma}\label{generalsl_2}
Let $M$ be an (possibly infinite dimensional) integrable
$U_q(sl_2)$-module with a canonical basis. If $N$ is a nice
$U_q(sl_2)^+$-submodule of $M$, then $U_q(sl_2)N=U_q(sl_2)^-N$ is a
nice $U_q(sl_2)$-submodule of $M$. Moreover,
$B(U_q(sl_2)N)=\bigcup_{m\geqslant 0} \tilde{f}^mB(N)\setminus
\{0\}$.
\end{lemma}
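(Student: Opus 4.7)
The plan is to reduce to the truncated setting of Lemma~\ref{truncated} by projecting $M$ onto its truncations $M_L := M / W^{L+1}(M)$, and then to glue the local conclusions back together using the isotypic decomposition $M = \bigoplus_l I^l(M)$. Write $\widetilde{N} := U_q(sl_2)N$, $\mathcal{B} := \bigcup_{m \geq 0} \tilde{f}^{\,m}B(N) \setminus \{0\}$, and $V := \bigoplus_{b \in \mathcal{B}} kG(b) \subseteq M$; the goal is $\widetilde{N} = V$. Since $N$ is spanned by weight vectors, it is $U_q(sl_2)^0$-stable, so the triangular decomposition combined with $U_q(sl_2)^{+}N \subseteq N$ immediately yields $\widetilde{N} = U_q(sl_2)^{-}N$.

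For each $L \geq 0$ the quotient $M_L$ is a truncated integrable $U_q(sl_2)$-module that inherits a canonical basis from $M$, since $W^{L+1}(M) = \bigoplus_{b \,:\, \varepsilon(b)+\varphi(b) \geq L+1} kG(b)$ is a nice submodule. The image $N_L$ of $N$ in $M_L$ is then a nice $U_q(sl_2)^{+}$-submodule with crystal basis $\{b \in B(N) : \varepsilon(b)+\varphi(b) \leq L\}$, and applying Lemma~\ref{truncated} (together with the fact that $\tilde{f}$ preserves $\varepsilon+\varphi$) gives
\[U_q(sl_2)\,N_L \;=\; \bigoplus_{\substack{b \in \mathcal{B} \\ \varepsilon(b)+\varphi(b) \leq L}} kG(b) \;\subseteq\; M_L .\]
Pulling back along the projection $M \to M_L$ yields the key identity
\[\widetilde{N} + W^{L+1}(M) \;=\; V + W^{L+1}(M) \qquad \text{for every } L \geq 0.\]

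The inclusion $\widetilde{N} \subseteq V$ will follow from a support argument: any $x \in \widetilde{N}$ has a unique finite expansion $x = \sum_b c_b G(b)$, and since $V + W^{L+1}(M) = \bigoplus_{b \in \mathcal{B} \cup \{b \,:\, \varepsilon(b)+\varphi(b) \geq L+1\}} kG(b)$, choosing $L$ larger than $\max\{\varepsilon(b)+\varphi(b) : c_b \neq 0\}$ forces $c_b = 0$ for every $b \notin \mathcal{B}$. For the reverse inclusion $V \subseteq \widetilde{N}$, the integrability of $M$ makes it completely reducible over $U_q(sl_2)$, so every submodule inherits the isotypic decomposition; in particular $\widetilde{N} = \bigoplus_l (\widetilde{N} \cap I^l(M))$. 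Projecting the key identity to $I^l(M)$ via this decomposition, and using that $W^{L+1}(M)$ has no $I^l$-component for $l \leq L$, gives $\pi_{I^l}(V) = \widetilde{N} \cap I^l(M) \subseteq \widetilde{N}$ for every $l$; any $v \in V$ is then the finite sum $\sum_l \pi_{I^l}(v)$, each term of which lies in $\widetilde{N}$.

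The main obstacle is that the descending induction driving the proofs of Lemma~\ref{sl_2}(iv) and Lemma~\ref{truncated} has no base case when $M$ is infinite-dimensional. The truncation $M \to M_L$ combined with the complete reducibility of integrable $U_q(sl_2)$-modules is what will circumvent this: each finite $L$ pins down the canonical basis of $\widetilde{N}$ in the region $\varepsilon + \varphi \leq L$, and the isotypic decomposition of $\widetilde{N}$ lets these partial descriptions be reassembled into the global equality $\widetilde{N} = V$.
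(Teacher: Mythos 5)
Your proposal is correct and follows essentially the same route as the paper: pass to the truncated quotients $M/W^{L+1}(M)$, apply Lemma~\ref{truncated} there, and recover $U_q(sl_2)N$ from the resulting identities $\widetilde{N}+W^{L+1}(M)=V+W^{L+1}(M)$ using finite support in the canonical basis and the isotypic decomposition. The paper condenses this gluing step into the single assertion $\widetilde{N}=\bigcap_{l\geqslant 0}(\widetilde{N}+W^{l}(M))$, which your two-inclusion argument via complete reducibility makes explicit, so the content is the same.
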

\begin{proof}
 One can define a nice $U_q(sl_2)$-submodule $W^l(M)$ of $M$ for
any $l\geqslant 0$ as before. Hence $M/W^l(M)$ is a truncated module
with a canonical basis $\{G(b)+W^l(M)| b\in I^j(B), j<l\}$ and
$(N+W^l(M))/W^l(M)$ is a nice $U_q(sl_2)^+$-submodule. Applying
Lemma~\ref{truncated}, we have
\[U_q(sl_2)\frac{N+W^l(M)}{W^l(M)}=\bigoplus_{b\in \bigoplus_{j<l}I^{j}(B(N))}
k(G(\tilde{f}^mb)+W^l(M)).\] It follows that
\[U_q(sl_2)(N+W^l(M))=(\bigoplus_{b\in
\bigoplus_{j<l}I^{j}(B(N))}kG(\tilde{f}^mb))\bigoplus(\bigoplus_{b\in
\bigoplus_{j\geqslant l}I^j(B)}kG(b)).\] Set
$\widetilde{N}=U_q(sl_2)N$. We have
$U_q(sl_2)(N+W^l(M))=\widetilde{N}+W^l(M).$ Hence
\[\widetilde{N}=\bigcap_{l\geqslant
0}(\widetilde{N}+W^l(M))=\bigcap_{l\geqslant 0}(\bigoplus_{b\in
\bigoplus_{j<l}I^{j}(B(N))}kG(\tilde{f}^mb)\bigoplus\bigoplus_{b\in
\bigoplus_{j\geqslant l}I^j(B)}kG(b))\] which is easily seen to be a
nice $U_q(sl_2)$-submodule of $M$. We denote by $B^l$ the
crystal basis of $\widetilde{N}+W^l(M)$, i.e. \[B^l=\{\tilde{f}^mb\
|\ b\in I^j(B(N)), \ j<l,\ m\geqslant 0,\ \tilde{f}^mb\neq 0\}\cup
W^l(B).\] Since $\tilde{f}^{m}b\in I^j(B)$ for $b\in I^j(B(N))$ and
$m\geqslant 0$ such that $\tilde{f}^mb\neq 0$, we have for $l< k$,
$B^l\supseteq B^k$ and $B^k\bigcap I^l(B)= \bigcup_{m\geqslant
0}\tilde{f}^mI^l(B(N))\setminus\{0\}$. It follows that
\[B(\widetilde{N})\bigcap I^l(B)=(\bigcap_{k\geqslant 0}B^k)\bigcap I^l(B)
=\bigcup_{m\geqslant 0}\tilde{f}^mI^l(B(N))\setminus\{0\}\] and
hence we have $B(\widetilde{N})=\bigcup_{l\geqslant
0}(B(\widetilde{N})\bigcap I^l(B)) =\bigcup_{m\geqslant 0}
\tilde{f}^mB(N)\setminus\{0\}$.
\end{proof}

We define $U_q(sl_2(i))$ to be the subalgebra of $U_q(\mathfrak g)$
generated by $E_i$, $F_i$ and $q^{\frac{(\alpha_i,\alpha_i)}{2}h_i}$
for some $i\in I$. Since $N$ is a nice $U_q(\mathfrak
g)^+$-submodule of $M$, it is also a nice
$U_q(sl_2(i))^+$-submodule. Hence $U_q(sl_2(i))N$ is a nice
$U_q(sl_2(i))$-submodule of $M$ by Lemma~\ref{generalsl_2}. It is
easy to see that $U_q(\mathfrak
g)^+U_q(sl_2(i))=U_q(sl_2(i))U_q(\mathfrak g)^+.$ Hence
\[U_q(sl_2(i))N=U_q(sl_2(i))U_q(\mathfrak g)^+N=U_q(\mathfrak g)^+U_q(sl_2(i))N\]
is still a $U_q(\mathfrak g)^+$-module. Repeating this, one can see
that \[U_q(sl_2(i_1))\cdots U_q(sl_2(i_m))N\] is a nice
$U_q(\mathfrak g)^+$-submodule of $M$ which admits a crystal basis
$\{\tilde{f}_{i_1}^{r_1}\cdots\tilde{f}_{i_m}^{r_m}b\ |$
$\ r_1,\
\cdots, r_m\in \mathbb Z_+,\ b\in B(N)\}\setminus \{0\}.$ This proves
Lemma~\ref{kashiwaralemma} since \[U_q(\mathfrak
g)N=\sum_{i_1,\cdots,i_m\in I}U(sl_2(i_1))\cdots U_q(sl_2(i_m))N.\]

\subsection{Composition Series}

The following construction of composition series is inspired by
\cite{FJKMMT:2004}. For $b\in B(\mu)$ with $wtb=\mu-\sum_{i\in
I}m_i\alpha_i$ where $m_i\geqslant0$, set $l(b)=\sum_{i\in I}m_i$.
Since $B(\mu)=\{\tilde{f}_{i_1}\cdots\tilde{f}_{i_l}u_{\mu}\ |\ i_1,
\cdots, i_l\in I, l\geqslant 0 \}\setminus \{0\},$ b is of the form
$\tilde{f}_{i_1}\cdots\tilde{f}_{i_l}u_{\mu}$ for some $i_1, \cdots,
i_l\in I$, $l\geqslant 0$. Hence
$wtb=\mu-\sum_{j=1}^{l}\alpha_{i_j}$, which implies $l=l(b)$. One
can define $|b|$ to be the $l(b)$-tuple $(i_1, \cdots, i_{l(b)})$
such that $(i_1, \cdots, i_{l(b)})$ is minimal in lexicographic
order among tuples $(j_1, \cdots,
 j_{l(b)})$ such that
 $\tilde{f}_{j_1}\cdots\tilde{f}_{j_{l(b)}}u_{\mu}=b$,
 i.e. \[|b|=min\{ (j_1, \cdots, j_{l(b)})\ |\ b=\tilde{f}_{j_1}
 \cdots\tilde{f}_{j_{l(b)}}u_{\mu} \}.\]
 Set $|u_{\mu}|=0$. Note that the order on $I$ is given as
 $1 < 2 < \cdots < n-1 < n .$
 If $|b_1|=|b_2|=(i_1, \cdots, i_l)$, we have
$b_1=b_2=\tilde{f}_{i_1}\cdots\tilde{f}_{i_l}u_{\mu}$
 which implies that there is a one to one correspondence between
 $B(\mu)$ and $\{|b|\ |\ b\in B(\mu)\}$. Thus we have a
 total order on $B(\mu)$ as the following:\\
 \[b_1\leqslant b_2\ \ \textrm{iff} \ \ l(b_1)>l(b_2)\ \  \textrm{or}\ \ l(b_1)=l(b_2)\ \
 \textrm{but} \ \ |b_1|\geqslant|b_2|. \]
 Obviously $b_1<b_2$ if $wtb_1<wtb_2$.
 \begin{example}\label{ordereg}In the case of type $A$, there is a combinatorial
 realization of the crystal $B(\lambda)$ for
 $\lambda\in P_+$. If $U_q(\mathfrak g)=U_q(sl_3)$,
$B(\Lambda_1+\Lambda_2)\cong B(\small\Yvcentermath1\yng(2,1))$ and
the crystal graph is given as the following
\[\includegraphics[width=54mm]{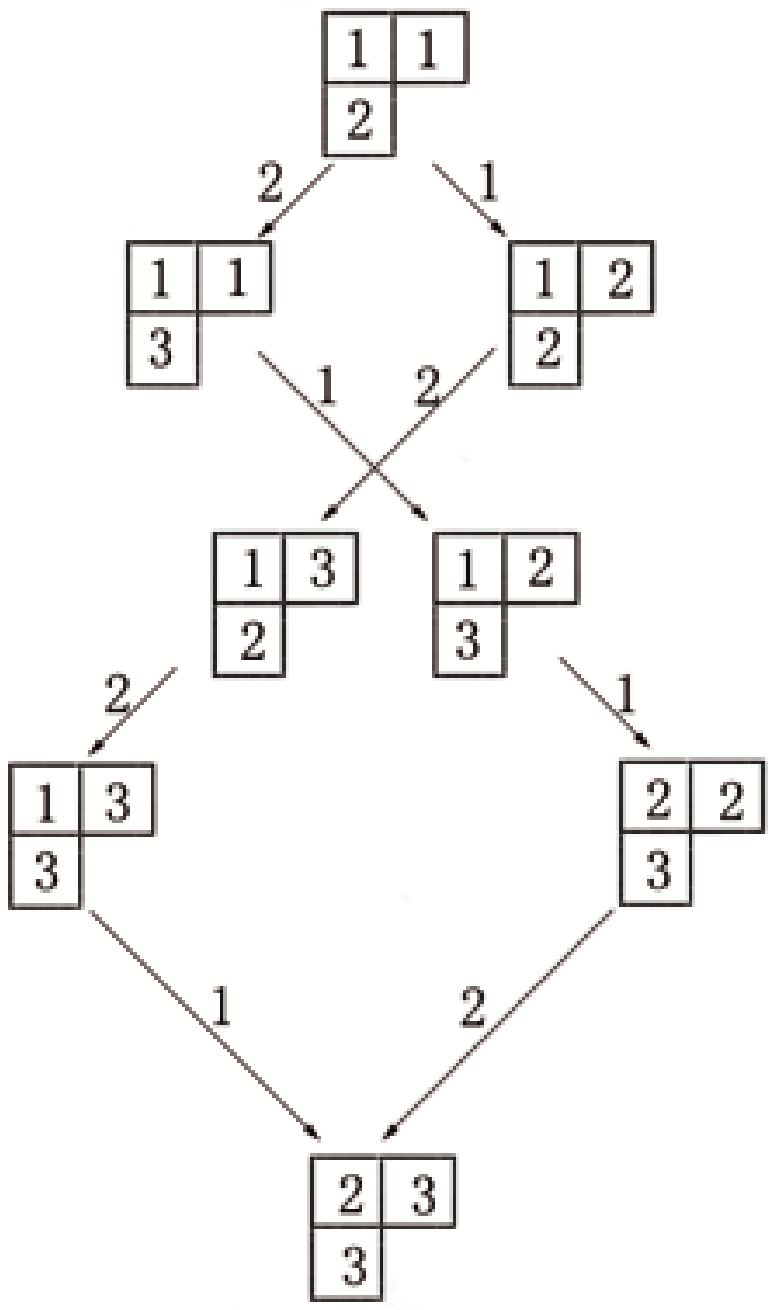} \] We have $|\small\Yvcentermath1\young(11,2)|=0,\
|\small\Yvcentermath1\young(11,3)|=(2),\
|\small\Yvcentermath1\young(12,2)|=(1),\
|\small\Yvcentermath1\young(13,2)|=(2,1),\
|\small\Yvcentermath1\young(12,3)|=(1,2),\
|\small\Yvcentermath1\young(13,3)|=(2,2,1),\
|\small\Yvcentermath1\young(22,3)|=(1,1,2),\
|\small\Yvcentermath1\young(23,3)|=(1,2,2,1).$
 Hence the order on $B(\Lambda_1+\Lambda_2)$ is given as the following
\[\small\Yvcentermath1\young(11,2)>\small\Yvcentermath1\young(12,2)>
\small\Yvcentermath1\young(11,3)> \small\Yvcentermath1\young(12,3)>
\small\Yvcentermath1\young(13,2)> \small\Yvcentermath1\young(22,3)>
\small\Yvcentermath1\young(13,3)>
\small\Yvcentermath1\young(23,3).\]
\end{example}

For $b\in B(\mu)$, we define a $k$-subspace $V_b(\mu)$ of $V(\mu)$
spanned by all canonical basis elements $G(c)$ such that $c\geqslant
b$, i.e. $V_b(\mu):= \sum_{c\geqslant b}kG(c).$

\begin{lemma}
For $\mu\in P_+$ and $b\in B(\mu)$, $V_b(\mu)$ is
 a nice $U_q(\mathfrak g)^+$-submodule of $V(\mu)$ and
 $B(V_b(\mu))=\{c\in B(\mu)\ |\ c\geqslant b\}$.
\end{lemma}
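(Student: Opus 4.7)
The plan is to prove both claims simultaneously by showing directly that $V_b(\mu)$ is stable under the action of each $E_i$ and each $q^h$; since $V_b(\mu)$ is spanned by canonical basis elements by its very definition, niceness is automatic, and the induced crystal basis in $L(\mu)/qL(\mu)$ is then exactly $\{c\in B(\mu)\mid c\geqslant b\}$.

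First I would dispose of the Cartan part: the canonical basis of $V(\mu)$ is a weight basis, so $q^{h}G(c)=q^{\langle h, wt\, c\rangle}G(c)$ for every $c$, which clearly preserves $V_b(\mu)$. The substantive step is to check stability under each $E_i$. Since $E_i$ is a homogeneous operator of weight $\alpha_i$ and the canonical basis is weight-homogeneous, one has an expansion
\[
E_i\, G(c)\;=\;\sum_{c'\in B(\mu),\ wt\,c'=wt\,c+\alpha_i} a_{c'}^{(i)}\,G(c'),\qquad a_{c'}^{(i)}\in k.
\]
Thus it suffices to verify that every $c'\in B(\mu)$ with $wt\,c'=wt\,c+\alpha_i$ satisfies $c'\geqslant b$ whenever $c\geqslant b$.

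This is where the total order introduced just before the lemma does the work. From $wt\,c'=wt\,c+\alpha_i$ one has $l(c')=l(c)-1<l(c)$. Now $c\geqslant b$ means either (i) $l(c)<l(b)$, in which case $l(c')<l(c)<l(b)$, or (ii) $l(c)=l(b)$ with $|c|\leqslant|b|$, in which case $l(c')<l(b)$. In both cases $l(c')<l(b)$, and by the very definition of the total order this gives $c'>b$, so $G(c')\in V_b(\mu)$. (This is essentially the contrapositive of the observation stated in the paper that $b_1<b_2$ whenever $wt\,b_1<wt\,b_2$.) Hence $E_i\cdot V_b(\mu)\subseteq V_b(\mu)$, and $V_b(\mu)$ is a $U_q(\mathfrak g)^+$-submodule.

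Finally, because $V_b(\mu)=\bigoplus_{c\geqslant b}k\,G(c)$, it is by definition compatible with the canonical basis of $V(\mu)$; reducing modulo $qL(\mu)$ identifies $B(V_b(\mu))$ with the set $\{c\in B(\mu)\mid c\geqslant b\}$, as asserted. I do not anticipate any genuine obstacle here: the only observation that must not be glossed is the compatibility between the partial order on weights and the total order on $B(\mu)$, and this is an immediate consequence of how $l(\cdot)$ was defined.
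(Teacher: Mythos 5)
Your proof is correct and takes essentially the same route as the paper: where you check stability generator by generator via $E_i G(c)=\sum_{wt\,c'=wt\,c+\alpha_i}a_{c'}^{(i)}G(c')$, the paper argues uniformly with the $Q_+$-grading $U_q(\mathfrak g)^+=\bigoplus_{\xi\in Q_+}U_q(\mathfrak g)^+_{\xi}$, but both reduce to the same key compatibility that raising the weight strictly lowers $l(\cdot)$ and hence moves strictly up in the total order, so the image stays in $V_b(\mu)$. The only superfluous step is your check of the $q^h$-action, since $U_q(\mathfrak g)^+$ as defined in the paper is generated by the $E_i$ alone.
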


\begin{proof}
We only need to show that $V_b(\mu)$ is a
 $U_q(\mathfrak g)^+$-submodule of $V(\mu)$. For any $c\in B(\mu)$
 where $c\geqslant b$, one can see that $V_{c}(\mu)\subseteq V_b(\mu)$
 and $U_q(\mathfrak g)^+G(c)=\bigoplus_{\xi\in Q_+}U_q(\mathfrak g)_{\xi}^+G(c)
 =kG(c)\bigoplus\bigoplus_{\xi\in Q_+\setminus\{0\}}U_q(\mathfrak
 g)_{\xi}^+G(c).$
 For $\xi\in Q_+\setminus\{0\}$,
 \[U_q(\mathfrak g)_{\xi}^+G(c)\subseteq V(\mu)_{wtc+\xi}=
 \sum_{wtd=wtc+\xi}kG(d)\]
 \[\subseteq \sum_{wtd>wtc}kG(d)\subseteq\sum_{d\geqslant
 c}kG(d)=V_{c}(\mu).\]
 Hence $\bigoplus_{\xi\in Q_+\setminus\{0\}}U_q(\mathfrak g)_{\xi}^+G(c)\subseteq
 V_{c}(\mu)$ and furthermore,
 $U_q(\mathfrak g)^+G(c)\subseteq V_{c}(\mu)\subseteq V_b(\mu).$
 It follows that $U_q(\mathfrak g)^+V_b(\mu)=\sum_{c\geqslant b}
 U_q(\mathfrak g)^+G(c)\subseteq V_b(\mu).$ Thus $V_b(\mu)$ is a
 nice $U_q(\mathfrak g)^+$-submodule of $V(\mu)$.
 \end{proof}

Clearly,  the above proof is independent of the order on $B(\mu)_l
=\{ b\in B(\mu)\ |\ l(b)=l\}$. More generally, we can choose any
total order on $B(\mu)$ such that $b_1<b_2$ if $wtb_1<wtb_2$.

 For $b\in B(\mu)$, we define a
  $U_q(\mathfrak g)$-submodule $F_{\lambda}(b)$ of $V(\lambda)\otimes V(\mu)$ generated
  by $u_{\lambda}\otimes V_b(\mu)$, i.e.
  \[F_{\lambda}(b):= U_q(\mathfrak g)(u_{\lambda}\otimes V_b(\mu)).\]
  Since it follows from the coproduct formula that
\[U_q(\mathfrak g)^+(u_{\lambda}\otimes V_b(\mu))=
  u_{\lambda}\otimes U_q(\mathfrak g)^+V_b(\mu)=u_{\lambda}\otimes
  V_b(\mu)\] and \[u_{\lambda}\otimes
  V_b(\mu)=\sum_{c\geqslant b}ku_{\lambda}\otimes G(c)=
  \sum_{c\geqslant b}kG(u_{\lambda}\otimes c),\] $u_{\lambda}\otimes V_b(\mu)$
is a nice $U_q(\mathfrak g)^+$-submodule of $V(\lambda)\otimes
V(\mu)$. We have the following proposition according to
Lemma~\ref{kashiwaralemma}.

\begin{proposition}\label{submodule}
For $\lambda$, $\mu\in P_+$ and $b\in B(\mu)$, $F_{\lambda}(b)$ is a
nice $U_q(\mathfrak g)$-submodule of $V(\lambda)\otimes V(\mu)$.
Moreover, $B(F_{\lambda}(b))=\{\
\tilde{f}_{i_1}\cdots\tilde{f}_{i_l}(u_{\lambda}\otimes c)\ |\ i_1,
\cdots,$ $i_l\in I, l\geqslant0, c\geqslant b \}\setminus\{0\}.$
\end{proposition}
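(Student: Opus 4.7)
The plan is to apply Kashiwara's Lemma (Lemma~\ref{kashiwaralemma}) directly, with $M = V(\lambda)\otimes V(\mu)$ and $N = u_{\lambda}\otimes V_b(\mu)$. Almost all the work has already been done in the text immediately preceding the proposition: the coproduct formula shows that $U_q(\mathfrak g)^+(u_{\lambda}\otimes V_b(\mu)) = u_{\lambda}\otimes V_b(\mu)$ since $E_i u_{\lambda} = 0$, and the identity $G(u_{\lambda}\otimes c) = u_{\lambda}\otimes G(c)$ (noted in the discussion of the tensor product canonical basis) shows
\[
u_{\lambda}\otimes V_b(\mu) \;=\; \sum_{c\geqslant b} k\, G(u_{\lambda}\otimes c),
\]
so $N$ is a nice $U_q(\mathfrak g)^+$-submodule of $V(\lambda)\otimes V(\mu)$ with crystal basis $B(N) = \{u_{\lambda}\otimes c \mid c\geqslant b\}$.

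Next, since $V(\lambda)\otimes V(\mu)$ is an integrable $U_q(\mathfrak g)$-module in $\mathcal O_{int}$ and has a canonical basis, Lemma~\ref{kashiwaralemma} applies and yields that $U_q(\mathfrak g) N$ is nice, with crystal basis
\[
B(U_q(\mathfrak g) N) \;=\; \{\,\tilde f_{i_1}\cdots \tilde f_{i_l} b'\mid l\geqslant 0,\ i_j\in I,\ b'\in B(N)\,\}\setminus\{0\}.
\]
Substituting $b' = u_{\lambda}\otimes c$ with $c\geqslant b$ gives exactly the set stated in the proposition. Finally one checks that $U_q(\mathfrak g) N$ coincides with the module $F_{\lambda}(b) = U_q(\mathfrak g)(u_{\lambda}\otimes V_b(\mu))$ defined in the text; this is immediate from the definition.

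There is really no serious obstacle here: the work has been front-loaded into (i) the construction of $V_b(\mu)$ as a nice $U_q(\mathfrak g)^+$-submodule of $V(\mu)$, (ii) the fact that tensoring with $u_{\lambda}$ on the left preserves niceness on the $U_q(\mathfrak g)^+$ side (via the coproduct and the vanishing $E_iu_\lambda=0$), and (iii) the lengthy proof of Kashiwara's Lemma given in the previous subsection via the $U_q(sl_2(i))$ reduction. The only point to be careful about is the exact form of the crystal basis in the conclusion: one must verify that $B(N)$, viewed as a subset of $B(\lambda)\otimes B(\mu)$, is precisely $\{u_{\lambda}\otimes c : c\geqslant b\}$, which follows because the canonical basis of $V(\lambda)\otimes V(\mu)$ restricted to the $u_{\lambda}$-component agrees with $u_{\lambda}\otimes$(canonical basis of $V(\mu)$). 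Everything else is a formal consequence of Lemma~\ref{kashiwaralemma}.
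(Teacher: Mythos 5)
Your proposal is correct and follows essentially the same route as the paper: the text preceding the proposition establishes via the coproduct formula and the identity $G(u_{\lambda}\otimes c)=u_{\lambda}\otimes G(c)$ that $u_{\lambda}\otimes V_b(\mu)$ is a nice $U_q(\mathfrak g)^+$-submodule, and the proposition is then deduced exactly as you do by applying Lemma~\ref{kashiwaralemma} to $N=u_{\lambda}\otimes V_b(\mu)$ inside $M=V(\lambda)\otimes V(\mu)$. No substantive differences.
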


\begin{theorem}\label{main}
For $\lambda$, $\mu\in P_+$, $\{\ F_{\lambda}(b)\ |\ b\in B(\mu)\}$
forms a nice ascending filtration of $V(\lambda)\otimes V(\mu)$ as
the following
\[0\subseteq F_{\lambda}(b_1)\subseteq F_{\lambda}(b_2)
\subseteq F_{\lambda}(b_3)\subseteq\cdots\ \ \ \ \ (3.1)\] where
$u_{\mu}=b_1>b_2>b_3>\cdots$ is a complete list of $B(\mu)$.
Moreover, for two neighbors $c>b$ in $B(\mu)$,
$F_{\lambda}(b)/F_{\lambda}(c)\cong V(\lambda+wtb)$ if
$\tilde{e}_i(u_{\lambda}\otimes b)=0$ for all $i\in I$, otherwise
$F_{\lambda}(b)=F_{\lambda}(c)$.
\end{theorem}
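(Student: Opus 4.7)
The plan is to build up the theorem from the niceness of each $F_{\lambda}(b)$ already established in Proposition~\ref{submodule}. The chain structure (3.1) is immediate: since $V_{b}(\mu)=\sum_{c\geqslant b}kG(c)$, we have $V_{c}(\mu)\subseteq V_{b}(\mu)$ whenever $c>b$, hence $F_{\lambda}(c)\subseteq F_{\lambda}(b)$. It remains to analyze the consecutive quotient $M:=F_{\lambda}(b)/F_{\lambda}(c)$ for neighbors $c>b$. Write $v:=u_{\lambda}\otimes G(b)=G(u_{\lambda}\otimes b)$, the second equality holding because $u_{\lambda}$ is a highest weight vector. Since $u_{\lambda}\otimes V_{b}(\mu)=u_{\lambda}\otimes V_{c}(\mu)+kv$, one has $F_{\lambda}(b)=F_{\lambda}(c)+U_{q}(\mathfrak{g})v$, so $M$ is cyclic with cyclic vector $\bar{v}$.

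In Case 2, some $\tilde{e}_{i}(u_{\lambda}\otimes b)\neq 0$: the tensor product rule forces $\tilde{e}_{i}(u_{\lambda}\otimes b)=u_{\lambda}\otimes\tilde{e}_{i}b$, and the weight-compatibility of the order on $B(\mu)$ gives $\tilde{e}_{i}b>b$, hence $\tilde{e}_{i}b\geqslant c$. Therefore $u_{\lambda}\otimes\tilde{e}_{i}b\in B(F_{\lambda}(c))$, and applying $\tilde{f}_{i}$ once more (Proposition~\ref{submodule}) shows $u_{\lambda}\otimes b=\tilde{f}_{i}(u_{\lambda}\otimes\tilde{e}_{i}b)\in B(F_{\lambda}(c))$; whence $v\in F_{\lambda}(c)$ and $F_{\lambda}(b)=F_{\lambda}(c)$.

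In Case 1, $\tilde{e}_{i}(u_{\lambda}\otimes b)=0$ for all $i$. From $\varepsilon_{i}(u_{\lambda}\otimes b)=0$ and $\varphi_{i}-\varepsilon_{i}=\langle h_{i},\cdot\rangle$ one gets $\varphi_{i}(u_{\lambda}\otimes b)=\langle h_{i},\lambda+wt\,b\rangle\geqslant 0$, so $\lambda+wt\,b\in P_{+}$. Using the coproduct and $E_{i}u_{\lambda}=0$, the vector $E_{i}v$ is proportional to $u_{\lambda}\otimes E_{i}G(b)$; each canonical basis element $G(b')$ occurring in $E_{i}G(b)$ has $wt\,b'=wt\,b+\alpha_{i}>wt\,b$, hence $b'\geqslant c$ in our order. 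Thus $E_{i}v\in u_{\lambda}\otimes V_{c}(\mu)\subseteq F_{\lambda}(c)$ and $\bar{v}$ is killed by every $E_{i}$; so $M$ is a cyclic highest weight module of dominant highest weight $\lambda+wt\,b$, hence a quotient of $V(\lambda+wt\,b)$.

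To identify $M$ exactly with $V(\lambda+wt\,b)$, we compare crystal bases. Since $V(\lambda)\otimes V(\mu)\in\mathcal{O}_{int}$, $B(\lambda)\otimes B(\mu)$ decomposes into connected components each with a unique highest weight element; the component $\mathcal{C}$ containing $u_{\lambda}\otimes b$ is therefore isomorphic to $B(\lambda+wt\,b)$. The \emph{main obstacle} is the disjointness claim $\mathcal{C}\cap B(F_{\lambda}(c))=\emptyset$: if some element $\tilde{f}_{j_{1}}\cdots\tilde{f}_{j_{k}}(u_{\lambda}\otimes c')$ with $c'\geqslant c$ lay in $\mathcal{C}$, then $u_{\lambda}\otimes c'\in\mathcal{C}$ and hence $wt\,c'\leqslant wt\,b$ in $P$, which combined with $c'\geqslant c>b$ and the weight-compatibility of the order forces $wt\,c'=wt\,b$; uniqueness of the highest weight vector in $\mathcal{C}$ then gives $c'=b$, contradicting $c'>b$. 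With disjointness established, $B(F_{\lambda}(b))=B(F_{\lambda}(c))\sqcup\mathcal{C}$, so the formal character of $M$ equals that of $V(\lambda+wt\,b)$, completing the identification.
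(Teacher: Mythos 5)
Your proposal is correct and takes essentially the same approach as the paper: both arguments rest on Proposition~\ref{submodule} (i.e.\ Kashiwara's Lemma~\ref{kashiwaralemma}) to describe $B(F_{\lambda}(b))$, handle the case $\tilde{e}_i(u_{\lambda}\otimes b)\neq 0$ by the identical observation that $u_{\lambda}\otimes b=\tilde{f}_i(u_{\lambda}\otimes\tilde{e}_ib)\in B(F_{\lambda}(c))$, and in the remaining case identify the subquotient with $V(\lambda+wtb)$ by showing the crystal difference is exactly the highest weight component generated by $u_{\lambda}\otimes b$. Your extra module-level steps (that $\bar{v}$ is a highest weight vector via the coproduct, and the character comparison) and your weight-based disjointness argument merely make explicit what the paper's crystal-level argument leaves implicit, so this is a variation in bookkeeping rather than a different route.
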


\begin{proof}
It suffices to show the second half. We have
$B(F_{\lambda}(b))\supseteq B(F_{\lambda}(c))$ if $c>b$ are two
neighbors in $B(\mu)$. Claim that \[B(F_{\lambda}(b))\setminus
B(F_{\lambda}(c))=
\{\tilde{f}_{i_1}\cdots\tilde{f}_{i_l}(u_{\lambda}\otimes b)\ |\
i_1, \cdots, i_l\in I, l\geqslant0\}\setminus\{0\}\] if
$\tilde{e}_i(u_{\lambda}\otimes b)=0$ for all $i\in I$, otherwise
$B(F_{\lambda}(b))=B(F_{\lambda}(c))$. Indeed, if
$B(F_{\lambda}(b))\setminus B(F_{\lambda}(c))$ is non-empty, it
follows from Proposition~\ref{submodule} that any element in
$B(F_{\lambda}(b))\setminus B(F_{\lambda}(c))$ is of the form
$\tilde{f}_{j_1}\cdots\tilde{f}_{j_k}(u_{\lambda}\otimes d)$ for
some $j_1, \cdots, j_k\in I$, $k\geqslant0$ and $d\in B(\mu)$ where
$c>d\geqslant b$ and it implies $d=b$. Hence if $u_{\lambda}\otimes
b\in B(F_{\lambda}(b))\setminus B(F_{\lambda}(c))$, we have 
\[B(F_{\lambda}(b))\setminus B(F_{\lambda}(c))=
\{\tilde{f}_{i_1}\cdots\tilde{f}_{i_l}(u_{\lambda}\otimes b)\ |\
i_1, \cdots, i_l\in I, l\geqslant0\}\setminus\{0\},\] otherwise if
$u_{\lambda}\otimes b\in B(F_{\lambda}(c))$,
$B(F_{\lambda}(b))=B(F_{\lambda}(c))$. If
$\tilde{e}_i(u_{\lambda}\otimes b)=0$ for all $i\in I$, assume that
$u_{\lambda}\otimes b\notin B(F_{\lambda}(b))\setminus
B(F_{\lambda}(c))$. We have $u_{\lambda}\otimes b\in
B(F_{\lambda}(c))$ and it is of the form
$\tilde{f}_{l_1}\cdots\tilde{f}_{l_t}(u_{\lambda}\otimes d)$ for
some $l_1, \cdots, l_t\in I$, $t\geqslant0$ and $d\in B(\mu)$ where
$d\geqslant c>b$. Since $\tilde{e}_i(u_{\lambda}\otimes b)=0$ for
all $i\in I$, it implies $t=0$ and $u_{\lambda}\otimes
b=u_{\lambda}\otimes d$ which is a contradiction. Thus
$u_{\lambda}\otimes b\in B(F_{\lambda}(b))\setminus
B(F_{\lambda}(b_1))$. Conversely, if $\tilde{e}_i(u_{\lambda}\otimes
b)\neq0$ for some $i\in I$, $\tilde{e}_i(u_{\lambda}\otimes
b)=u_{\lambda}\otimes \tilde{e}_ib\neq0$ where
$wt\tilde{e}_ib=wtb+\alpha_i$. It follows that $\tilde{e}_ib>b$ and
furthermore, $\tilde{e}_ib\geqslant c$. Hence
\[u_{\lambda}\otimes b=\tilde{f}_i\tilde{e}_i(u_{\lambda}\otimes b)
=\tilde{f}_i(u_{\lambda}\otimes \tilde{e}_ib)\in
B(F_{\lambda}(c)).\] We have proved the claim which implies the
theorem.
\end{proof}

By deleting superfluous terms in the filtration $(3.1)$, we have a
nice composition series of $V(\lambda)\otimes V(\mu)$.
\begin{corollary}\label{compseries}
For $\lambda$, $\mu\in P_+$, there is a nice ascending composition
series of $U_q(\mathfrak g)$-module $V(\lambda)\otimes V(\mu)$ by
listing the elements in $\{F_{\lambda}(b)\ |\ b\in B(\mu),
\tilde{e}_i(u_{\lambda}\otimes b)=0\ \  \forall i\in I \}$ according
to descending order on $B(\mu)$.
\end{corollary}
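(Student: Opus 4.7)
The plan is to extract the desired composition series directly from the ascending filtration $(3.1)$ of Theorem~\ref{main} by discarding the terms for which successive inclusion collapses to an equality. Concretely, I would invoke Theorem~\ref{main} for every pair of neighbors $c > b$ in the total order on $B(\mu)$: one has the dichotomy $F_\lambda(b) = F_\lambda(c)$ precisely when $\tilde{e}_i(u_\lambda \otimes b)\ne 0$ for some $i\in I$, and $F_\lambda(b)/F_\lambda(c) \cong V(\lambda + wt\, b)$ (irreducible) precisely when $\tilde{e}_i(u_\lambda \otimes b)=0$ for all $i\in I$. Set $\mathcal S = \{b\in B(\mu)\mid \tilde{e}_i(u_\lambda\otimes b)=0\ \forall i\in I\}$; this set contains $u_\mu$, and it indexes exactly the strictly ascending jumps in $(3.1)$.

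Next I would verify that listing $\{F_\lambda(b)\mid b\in\mathcal S\}$ in the descending order inherited from $B(\mu)$ produces a composition series with irreducible successive quotients. For two consecutive elements $b^{(j)}>b^{(j+1)}$ in $\mathcal S$, each intermediate element $b\in B(\mu)$ (if any) lies outside $\mathcal S$ and so satisfies $F_\lambda(b)=F_\lambda(c)$ for its immediate predecessor $c$ in $B(\mu)$; telescoping these equalities gives $F_\lambda(b^{(j+1)})/F_\lambda(b^{(j)}) = F_\lambda(b^{(j+1)})/F_\lambda(c^*)$, where $c^*$ is the immediate predecessor of $b^{(j+1)}$ in $B(\mu)$. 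By the second alternative of Theorem~\ref{main} this last quotient is the irreducible module $V(\lambda + wt\, b^{(j+1)})$, so the thinned chain has irreducible successive quotients, and niceness of each term is supplied by Proposition~\ref{submodule}.

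The fact that $\bigcup_{b\in B(\mu)} F_\lambda(b) = V(\lambda)\otimes V(\mu)$ is built into the statement of Theorem~\ref{main} (the filtration is asserted there to be a filtration of $V(\lambda)\otimes V(\mu)$), so the thinned chain is automatically exhaustive. Combining all of the above gives the required nice ascending composition series.

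Since essentially all the work has been done in Theorem~\ref{main} and Proposition~\ref{submodule}, the main point to handle carefully is the telescoping over runs of collapsed intermediate terms between consecutive elements of $\mathcal S$; I do not anticipate any serious obstacle beyond this routine bookkeeping.
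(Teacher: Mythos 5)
Your proposal is correct and coincides with the paper's own argument: the corollary is obtained there in exactly this way, by deleting from the filtration $(3.1)$ of Theorem~\ref{main} the superfluous terms where $F_{\lambda}(b)=F_{\lambda}(c)$, so that only the jumps indexed by $b$ with $\tilde{e}_i(u_{\lambda}\otimes b)=0$ for all $i\in I$ survive, each contributing the irreducible quotient $V(\lambda+wt\,b)$. Your telescoping over the collapsed intermediate terms just makes explicit the bookkeeping the paper leaves implicit.
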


Lusztig's conjecture for $\mathfrak g$ of finite type is then an
immediate consequence of the Corollary~\ref{compseries}.

\begin{corollary}\label{conjecture}
For $\lambda$, $\mu\in P_+$ and $\mathfrak g$ of finite type, there
is a nice composition series of $U_q(\mathfrak g)$-module
$V(\lambda)\otimes V(-\mu)$ by listing the elements in
$\{F_{\lambda}(b)\ |\ b\in B(-\mu), \tilde{e}_i(u_{\lambda}\otimes
b)=0\ \  \forall i\in I \}$ according to descending order on
$B(-\mu)$.
\end{corollary}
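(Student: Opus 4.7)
The plan is to deduce this directly from Corollary~\ref{compseries} by using the fact that, in finite type, the lowest weight module $V(-\mu)$ can be identified with a highest weight module, and this identification is compatible with the canonical basis.

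First I would recall the observation made earlier in the paper: when $\mathfrak g$ is of finite type, $V(\mu)$ (with highest weight $\mu\in P_+$) is finite dimensional, hence also a lowest weight module with lowest weight $w_0\mu$, and the canonical isomorphism between the two descriptions is a nice $U_q(\mathfrak g)$-isomorphism. Dualizing this observation, for the module $V(-\mu)$ in the paper's convention (lowest weight module with lowest weight $-\mu$), one sets $\mu^{\ast}:=-w_0\mu\in P_+$ and obtains a nice $U_q(\mathfrak g)$-isomorphism $g:V(-\mu)\xrightarrow{\sim}V(\mu^{\ast})$ sending the lowest weight vector $u_{-\mu}$ to the extremal canonical basis element of weight $-\mu$ in $V(\mu^{\ast})$. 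Tensoring with $V(\lambda)$ and using that the tensor-product canonical basis depends only on the factor canonical bases, $1\otimes g$ gives a nice isomorphism $V(\lambda)\otimes V(-\mu)\xrightarrow{\sim}V(\lambda)\otimes V(\mu^{\ast})$, and the induced crystal isomorphism $g_\ast:B(-\mu)\xrightarrow{\sim}B(\mu^{\ast})$ is weight-preserving.

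Next I would apply Corollary~\ref{compseries} to $V(\lambda)\otimes V(\mu^{\ast})$, which provides a nice ascending composition series $\{F_\lambda(b')\mid b'\in B(\mu^{\ast}),\ \tilde e_i(u_\lambda\otimes b')=0\ \forall i\in I\}$ arranged in descending order on $B(\mu^{\ast})$. Transporting this filtration back to $V(\lambda)\otimes V(-\mu)$ through $(1\otimes g)^{-1}$ produces a chain of submodules indexed by the preimages $g_\ast^{-1}(b')\in B(-\mu)$. Since $1\otimes g$ is nice, each transported submodule is compatible with the canonical basis of $V(\lambda)\otimes V(-\mu)$, and the successive quotients are the same irreducibles $V(\lambda+\mathrm{wt}\,b')$ as in the target, so this is genuinely a nice composition series.

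Finally I would observe that the labelling matches the statement of Corollary~\ref{conjecture}. The crucial point (already stressed after Theorem~\ref{main}) is that the construction of $F_\lambda(b)$ and the proof that it yields a nice filtration use only the fact that the order on $B(-\mu)$ refines the weight order, i.e.\ $b_1<b_2$ whenever $\mathrm{wt}\,b_1<\mathrm{wt}\,b_2$; any total order with this property works. Both the order transported from $B(\mu^{\ast})$ via $g_\ast^{-1}$ and the intrinsic order on $B(-\mu)$ built from the $\tilde e_i$'s (analogous to the one on $B(\mu)$ built from the $\tilde f_i$'s) have this property, so the composition series obtained matches the one described in the statement. The only delicate point, and the step I would write out most carefully, is the verification that $g$ is indeed a nice isomorphism, but this is exactly the content of the parenthetical remark already made in the paper, so I would cite that and not reprove it.
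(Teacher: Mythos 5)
Your proposal is correct and follows essentially the same route as the paper: the paper treats Corollary~\ref{conjecture} as an immediate consequence of Corollary~\ref{compseries}, using precisely the finite-type identification of $V(-\mu)$ with a highest weight module via a nice isomorphism (the remark in Section~2.3 about $V(\mu)\cong V(w_0\mu)$), together with the observation that any total order refining the weight order works. Your write-up merely makes the transport of the filtration through $1\otimes g$ explicit, which the paper leaves implicit.
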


\begin{example} For $\mathfrak g=sl_3$, consider the $U_q(\mathfrak
g)$-mod $V(\Lambda_1)\otimes V(-\Lambda_1-\Lambda_2)$ as in
Example~\ref{nocom}. Since $V(-\Lambda_1-\Lambda_2)\cong
V(\Lambda_1+\Lambda_2)$ where the total order on the crystal basis
$B(\Lambda_1+\Lambda_2)$ of $V(\Lambda_1+\Lambda_2)$ is given as in
Example~\ref{ordereg}, there exists a nice filtration of the tensor
product
\[0\subseteq F_{\Lambda_1}(\small\Yvcentermath1\young(11,2))\subseteq F_{\Lambda_1}(\small\Yvcentermath1\young(12,2))
\subseteq F_{\Lambda_1}(\small\Yvcentermath1\young(11,3))\subseteq
F_{\Lambda_1}(\small\Yvcentermath1\young(12,3))\subseteq
F_{\Lambda_1}(\small\Yvcentermath1\young(13,2))\]\[\subseteq
F_{\Lambda_1}(\small\Yvcentermath1\young(22,3))\subseteq
F_{\Lambda_1}(\small\Yvcentermath1\young(13,3))\subseteq
F_{\Lambda_1}( \small\Yvcentermath1\young(23,3))=V(\Lambda_1)\otimes
V(-\Lambda_1-\Lambda_2).\] One can check that $u_{\Lambda_1}\otimes
\small\Yvcentermath1\young(11,2),\ u_{\Lambda_1}\otimes
\small\Yvcentermath1\young(12,2), \ u_{\Lambda_1}\otimes
\small\Yvcentermath1\young(12,3)$ are maximal vectors while others
are not. Hence \[0\subsetneq
F_{\Lambda_1}(\small\Yvcentermath1\young(11,2))\subsetneq
F_{\Lambda_1}(\small\Yvcentermath1\young(12,2))
 \subsetneq
F_{\Lambda_1}(\small\Yvcentermath1\young(12,3))=V(\Lambda_1)\otimes
V(-\Lambda_1-\Lambda_2)\] is the nice composition of
$V(\Lambda_1)\otimes V(-\Lambda_1-\Lambda_2)$ where $
F_{\Lambda_1}(\small\Yvcentermath1\young(11,2))\cong
V(2\Lambda_1+\Lambda_2)$,
$F_{\Lambda_1}(\small\Yvcentermath1\young(12,2))/F_{\Lambda_1}(\small\Yvcentermath1\young(11,2))\cong
V(2\Lambda_2)$,
$F_{\Lambda_1}(\small\Yvcentermath1\young(12,3))/F_{\Lambda_1}(\small\Yvcentermath1\young(12,2))\cong
V(\Lambda_1)$.
\end{example}

From the proof of Theorem~\ref{main} one can derive the generalized
Littlewood-Richardson rule  for symmetrizable Kac-Moody algebra
$\mathfrak g$, that is,
\[V(\lambda)\otimes V(\mu)\cong \bigoplus_{b\in B(\mu), \
 \tilde{e}_i(u_{\lambda}\otimes b)=0\ \forall i\in I}V(\lambda+wtb).\]
This generalized Littlewood-Richardson rule was proved by Littelmann
using  path model \cite{Littelmann:1994}, see also
\cite{Kashiwara:1990}. One can see from the tensor rule of crystal
bases that
 $\tilde{e}_i(u_{\lambda}\otimes b)=0$ for all $i\in I$ is equivalent to
 \[\tilde{e}_i^{\langle h_i, \lambda\rangle +1}b=0 \ \textrm{for\ all}\ i\in I\]
 and
 such a crystal basis element $b$ is called \textit{$\lambda$-dominant} in
 \cite{Littelmann:1994}.

\subsection{Comparison With Lusztig's Composition Series}

As stated in the introduction, one can also construct a composition
series of $V(\lambda)\otimes V(\mu)$ inductively in Lusztig's
manner. To be precise, for any $M\in \mathcal O_{int}$ with a
canonical basis, we write $M$ as a direct sum of isotypical
components $M=\bigoplus_{\xi\in P_+}M[\xi].$ Let $\lambda_1$ be a
maximal weight in the set $\{\xi\in P_+ |\ M[\xi]\neq 0\}$. We can see
from the proof of Proposition 27.1.7 in \cite{Lusztig:1993} that
there exists a nice submodule $V_1\cong V(\lambda_1)$ of $M$. Go on
this procedure by changing $M$ to $M_2:= M/V_1$ and so on.
Thus we have a nice $U_q(\mathfrak g)$-submodule $V_i\cong
V(\lambda_i)$ of $M_i$ for some $\lambda_i\in P_+$ maximal in the
weights of $M_i$ where $M_1=M$ and $M_{i+1}=M_i/V_i$. Let $\pi_i$ be
the canonical map $\pi_i: M_i\longrightarrow M_{i+1}.$ We obtain
then a sequence consisting of nice surjective $U_q(\mathfrak
g)$-maps
\[M=M_1\xrightarrow{\ \pi_1\ }\ M_2\xrightarrow{\ \pi_2\ }\cdots\xrightarrow{\pi_{i-1}}M_i
\xrightarrow{\ \pi_i\ }M_{i+1}\xrightarrow{\pi_{i+1}}\cdots .\] We
define $F_i(M)$ to be the kernel of
$\pi_i\circ\pi_{i-1}\circ\cdots\circ\pi_1$ for $i\geqslant 1$ and
set $F_0(M)=0$. One can see easily from the construction that
\[0=F_0(M)\subseteq F_1(M)\subseteq \cdots\subseteq F_i(M)\subseteq F_{i+1}(M)\subseteq\cdots\ \ \ \ (3.2)\]
is a nice composition series of $M$ where $F_i(M)/F_{i-1}(M)\cong
V(\lambda_i)$. Furthermore, it is clear to see that
$\lambda_i\geqslant \lambda_j$ for $i<j$ if they are comparable. In
particular, for $\lambda, \mu\in P_+$, there is a nice composition
series of $V(\lambda)\otimes V(\mu)$. We denote
 by $F_i$ the $U_q(\mathfrak g)$-submodule $F_i(V(\lambda)\otimes V(\mu))$ of $V(\lambda)\otimes V(\mu)$ defined
above for simplicity.\\
\indent Let $b^{\prime}_j$ be the unique highest weight element in
$B(F_j)\setminus B(F_{j-1})$. We know from the previous subsection
that $b^{\prime}_j\in B(\lambda)\otimes B(\mu)$ is of the form
$u_{\lambda}\otimes c_j$ for some $c_j\in B(\mu)$ such that
$\tilde{e}_i(u_{\lambda}\otimes c_j)=0$ for all $i\in I$. One can
see that $\lambda_j=\lambda+wtb_j$ and $\{c_j\ |\ j=1,2,\cdots \}$
is a complete set of elements $b$ such that $u_{\lambda}\otimes b$
is maximal. One can arrange a total order on $B(\mu)$ satisfying the
following two conditions,
\begin{itemize}
\item[(i)]
for $b, c\in B(\mu)$, $b<c$ if $wtb<wtc$.
\item[(ii)]
$c_1>c_2>c_3>\cdots>c_j>c_{j+1}>\cdots$.
\end{itemize}
Indeed we can define $u_{\mu}$ to be the maximum in $B(\mu)$ (one
can see $u_{\mu}=c_1$), then choose an element in
$B(\mu)\setminus\{u_{\mu}\}$ maximal in weight to be the second and
so on only to ensure that $c_1>c_2>c_3>\cdots>c_j>c_{j+1}>\cdots$.
It is feasible since one can see from the inductive construction of
composition series that $wtc_i\geqslant wtc_j$ for $i<j$ if they are
comparable. Once such a total order on $B(\mu)$ is fixed, we
immediately obtain, by Corollary~\ref{compseries}, a nice
composition series of $V(\lambda)\otimes V(\mu)$
\[0\subseteq F_{\lambda}(c_1)\subseteq F_{\lambda}(c_2)\subseteq\cdots\subseteq F_{\lambda}(c_i)\subseteq
F_{\lambda}(c_{i+1})\subseteq\cdots\ \ \ \ (3.3). \] It is clear
that (3.3) coincides with (3.2) when $M=V(\lambda)\otimes V(\mu)$,
i.e.
$F_i=F_{\lambda}(c_i).$\\
\indent Conversely, if we construct the nice composition series of
$V(\lambda)\otimes V(\mu)$
\[0:= F_{\lambda}(b_0)\subseteq F_{\lambda}(b_1)\subseteq
F_{\lambda}(b_2)\subseteq\cdots\subseteq F_{\lambda}(b_i)\subseteq
F_{\lambda}(b_{i+1})\subseteq\cdots\ \ \ \ (3.4) \] as in the
previous subsection, it can be seen from the choice of total order
that $\lambda_i\geqslant\lambda_j$ for $i<j$ if they are comparable
where $\lambda_i\in P_+$ is such that
$F_{\lambda}(b_i)/F_{\lambda}(b_{i-1})\cong V(\lambda_i)$. Hence for
$M=V(\lambda)\otimes V(\mu)=M_1$, we define
$M_i=M/F_{\lambda}(b_{i-1})$,
$V_i=F_{\lambda}(b_i)/F_{\lambda}(b_{i-1})$ and $\pi_i$ as stated
above. It follows easily that the composition series constructed in
Lusztig's manner is exactly (3.4), i.e. $F_i(M):=
ker(\pi_i\circ\pi_{i-1}\circ\cdots\circ\pi_1)=F_{\lambda}(b_i).$
Hence we get the same nice composition series of the tensor product
in two different approaches.

\section{Nice Filtration of $V(\lambda)\otimes V(-\mu)$}

\subsection{Filtration}

In the previous section we have proved, by
Corollary~\ref{conjecture}, Lusztig's conjecture that the
$U_q(\mathfrak g)$-module $V(\lambda)\otimes V(-\mu)$ has a nice
composition series for $\mathfrak g$ of finite type and $\lambda,
\mu\in P_+$. For an arbitrary symmetrizable Kac-Moody algebra
$\mathfrak g$, the $U_q(\mathfrak g)$-module $V(\lambda)\otimes
V(-\mu)$ also admits a canonical basis as mentioned previously. But
the tensor product
 may have infinite dimensional weight spaces (when $\lambda$ and $\mu$
 are both nontrivial) and have no maximal weights. Therefore it does not
belong to  category $\mathcal O_{int}$ and Lusztig's approach to
construct nice submodules of $V(\lambda)\otimes V(-\mu)$ fails while
our method still works in this case. To be precise, though we cannot
obtain a composition series of the tensor product in general,
we find a nice filtration of it instead which helps us to understand the structure of this module.\\
\indent Indeed, we can define a total order on $B(-\mu)$ similarly.
For $b\in B(-\mu)$ which is of the form
$\tilde{e}_{i_1}\cdots\tilde{e}_{i_l}u_{-\mu}$, set $l(b)=l$ and
define $|b|$ to be the $l(b)$-tuple $(i_1, \cdots, i_{l(b)})$ such
that $(i_1, \cdots, i_{l(b)})$ is minimal in lexicographic order
among tuples $(j_1, \cdots, j_{l(b)})$ such that
 $\tilde{e}_{j_1}\cdots\tilde{e}_{j_{l(b)}}u_{-\mu}=b$,
 i.e. 
 \[|b|=min\{ (j_1, \cdots, j_{l(b)})\ |\ b=\tilde{e}_{j_1}
 \cdots\tilde{e}_{j_{l(b)}}u_{-\mu} \}.\]
 Set $|u_{-\mu}|=0$. A total order on $B(-\mu)$ is defined as
 \[b_1\leqslant b_2\ \ \textrm{iff} \ \ l(b_1)<l(b_2)\ \  \textrm{or}\ \ l(b_1)=l(b_2)\ \
 \textrm{but} \ \ |b_1|\leqslant|b_2|. \]
As in section 3, for $b\in B(\mu)$, $V_b(-\mu)$ is defined as a
$k$-subspace of $V(-\mu)$ spanned by all $G(c)$ such that
$c\geqslant b$ and let $F_{\lambda}(b)$ be the $U_q(\mathfrak
g)$-submodule of $V(\lambda)\otimes V(-\mu)$ generated by
$u_{\lambda}\otimes V_b(-\mu)$, i.e. 
\[F_{\lambda}(b):=U_q(\mathfrak
g)(u_{\lambda}\otimes V_b(-\mu)).\] As the proof of
Theorem~\ref{main}, we have the following theorem by
Lemma~\ref{kashiwaralemma}.
\begin{theorem}
For $\lambda$, $\mu\in P_+$, $\{\ F_{\lambda}(b)\ |\ b\in B(-\mu)\}$
forms a nice descending filtration of $V(\lambda)\otimes V(-\mu)$ as
the following
\[V(\lambda)\otimes V(-\mu)=F_{\lambda}(b_1)\supseteq F_{\lambda}(b_2)\supseteq F_{\lambda}(b_3)
\supseteq\cdots\ \ \ \ \ (4.1)\] where $u_{-\mu}=b_1<b_2<b_3<\cdots$
is a complete list of $B(-\mu)$. Moreover, for two neighbors $b<c$
in $B(-\mu)$, $F_{\lambda}(b)/F_{\lambda}(c)\cong V(\lambda+wtb)$ if
$\tilde{e}_i(u_{\lambda}\otimes b)=0$ for all $i\in I$, otherwise
$F_{\lambda}(b)=F_{\lambda}(c)$.
\end{theorem}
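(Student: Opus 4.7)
The plan is to mirror the proof of Theorem~\ref{main} in the lowest weight setting, using the total order on $B(-\mu)$ defined in this section. The first ingredient I would verify is that the order is still compatible with weights: if $wtb_1 < wtb_2$ in the partial order on $Q_+$, then $wtb_2 - wtb_1 \in Q_+ \setminus \{0\}$ forces $l(b_2) - l(b_1) > 0$, so $b_1 < b_2$. This is what keeps the $U_q(\mathfrak g)^+$-action compatible with the filtration pieces.

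Next I would show that $V_b(-\mu) = \sum_{c \geq b} k G(c)$ is a nice $U_q(\mathfrak g)^+$-submodule of $V(-\mu)$ by the same argument as the lemma preceding Theorem~\ref{main}: for $c \geq b$ and $\xi \in Q_+\setminus\{0\}$, $U_q(\mathfrak g)^+_\xi G(c)$ sits in the weight space $V(-\mu)_{wtc + \xi}$, which is spanned by $G(d)$ with $wtd > wtc$, hence $d > c \geq b$. Since $u_\lambda$ is annihilated by every $E_i$ and $G(u_\lambda \otimes c) = u_\lambda \otimes G(c)$, the space $u_\lambda \otimes V_b(-\mu) = \sum_{c \geq b} k\, G(u_\lambda \otimes c)$ is a nice $U_q(\mathfrak g)^+$-submodule of $V(\lambda) \otimes V(-\mu)$. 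Applying Kashiwara's lemma (Lemma~\ref{kashiwaralemma}) — available because $V(\lambda) \otimes V(-\mu)$ is integrable and carries a canonical basis — yields that $F_\lambda(b)$ is a nice $U_q(\mathfrak g)$-submodule with crystal basis
\[ B(F_\lambda(b)) = \{\tilde{f}_{i_1}\cdots\tilde{f}_{i_l}(u_\lambda \otimes c)\ |\ l \geq 0,\ c \geq b\} \setminus \{0\}. \]
The descending property $F_\lambda(b) \supseteq F_\lambda(c)$ for $b \leq c$ is then immediate from $V_c(-\mu) \subseteq V_b(-\mu)$.

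For the quotient statement on neighbors $b < c$, I would argue exactly as in Theorem~\ref{main}. Any element of $B(F_\lambda(b)) \setminus B(F_\lambda(c))$ is of the form $\tilde{f}_{j_1}\cdots\tilde{f}_{j_k}(u_\lambda \otimes d)$ with $d \geq b$; since it does not lie in $B(F_\lambda(c))$ we must have $d \not\geq c$, and by totality of the order together with the neighbor assumption this forces $d = b$. If $\tilde{e}_i(u_\lambda \otimes b) \neq 0$ for some $i$, the tensor crystal rule gives $\tilde{e}_i(u_\lambda \otimes b) = u_\lambda \otimes \tilde{e}_i b$ with $wt\tilde{e}_i b = wtb + \alpha_i > wtb$, so $\tilde{e}_i b \geq c$, and then $u_\lambda \otimes b = \tilde{f}_i(u_\lambda \otimes \tilde{e}_i b) \in B(F_\lambda(c))$, whence $F_\lambda(b) = F_\lambda(c)$. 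Otherwise $\tilde{e}_i(u_\lambda \otimes b) = 0$ for all $i$, and any hypothetical expression $u_\lambda \otimes b = \tilde{f}_{l_1}\cdots\tilde{f}_{l_t}(u_\lambda \otimes d)$ with $d \geq c$ must have $t \geq 1$, which would force $\tilde{e}_{l_1}(u_\lambda \otimes b) = \tilde{f}_{l_2}\cdots\tilde{f}_{l_t}(u_\lambda \otimes d) \neq 0$ via $\tilde{e}_i\tilde{f}_i = \mathrm{id}$ on nonzero $\tilde{f}_i$-images — a contradiction. Hence $B(F_\lambda(b)) \setminus B(F_\lambda(c)) = \{\tilde{f}_{i_1}\cdots\tilde{f}_{i_l}(u_\lambda \otimes b)\} \setminus \{0\}$ realizes the crystal of a highest weight module of highest weight $\lambda + wtb$, giving $F_\lambda(b)/F_\lambda(c) \cong V(\lambda + wtb)$. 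The only conceptual novelty relative to Theorem~\ref{main} is that $V(\lambda) \otimes V(-\mu)$ need not lie in $\mathcal O_{int}$ and may have infinite dimensional weight spaces, so no Jordan-H\"older type finiteness is available; nevertheless Lemma~\ref{kashiwaralemma} still applies, and the argument goes through unchanged.
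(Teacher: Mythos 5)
Your proposal is correct and follows essentially the same route as the paper, which proves this theorem simply by repeating the argument of Theorem~3.8 (the $V(\lambda)\otimes V(\mu)$ case) with the total order on $B(-\mu)$ and invoking Kashiwara's Lemma~\ref{kashiwaralemma}; your verification that the order is weight-compatible, that $u_\lambda\otimes V_b(-\mu)$ is a nice $U_q(\mathfrak g)^+$-submodule, and the neighbor analysis of $B(F_\lambda(b))\setminus B(F_\lambda(c))$ all match the paper's reasoning. The only cosmetic omission is the remark that $F_\lambda(u_{-\mu})$ is the whole module, which follows from the cyclicity of $V(\lambda)\otimes V(-\mu)$ on $u_\lambda\otimes u_{-\mu}$ noted earlier in the paper.
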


Actually the order on $B(-\mu)$ can be chosen only to satisfy the
property that $b_1<b_2$ if $wtb_1<wtb_2$. In contrast to
Corollary~\ref{conjecture}, usually we cannot get a nice composition
series of $V(\lambda)\otimes V(-\mu)$ by deleting superfluous terms
in (4.1). More precisely, the intersection of all submodules in
(4.1) might be nonzero. For example, when $\mathfrak g$ is of affine
type and $\lambda-\mu$ is of a negative level,
$F_{\lambda}(b)=V(\lambda)\otimes V(-\mu)$ for
all $b\in B(-\mu)$.\\
\indent Similarly, with the order on $B(\lambda)$ defined in section
3, we can construct another nice filtration of $V(\lambda)\otimes
V(-\mu)$. For $b\in B(\lambda)$, define $F_{-\mu}(b)$ to be the
$U_q(\mathfrak g)$-submodule of $V(\lambda)\otimes V(-\mu)$
generated by $G(c)\otimes u_{-\mu}$ for all $c\leqslant b$. Note
that when we change $U_q(\mathfrak g)^+$ to $U_q(\mathfrak g)^-$,
Lemma~\ref{kashiwaralemma} is also true which implies the following
theorem.

\begin{theorem}
  For $\lambda$, $\mu\in P_+$, $\{\ F_{-\mu}(b)\ |\ b\in
B(\lambda)\}$ forms a nice descending filtration of
$V(\lambda)\otimes V(-\mu)$ as the following
\[V(\lambda)\otimes V(-\mu)=F_{-\mu}(b_1)\supseteq F_{-\mu}(b_2)\supseteq F_{-\mu}(b_3)
\supseteq\cdots\ \ \ \ \ (4.2)\] where
$u_{\lambda}=b_1>b_2>b_3>\cdots$ is a complete list of $B(\lambda)$.
Moreover, for two neighbors $b>c$ in $B(\lambda)$,
$F_{-\mu}(b)/F_{-\mu}(c)\cong V(-\mu+wtb)$ if $\tilde{f}_i(b\otimes
u_{-\mu})=0$ for all $i\in I$, otherwise, $F_{-\mu}(b)=F_{-\mu}(c)$.
\end{theorem}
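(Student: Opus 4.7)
The plan is to dualize the proof of Theorem~\ref{main}: swap the roles of $u_{\lambda}$ in the first tensor slot and $u_{-\mu}$ in the second, replace $U_q(\mathfrak g)^+$ by $U_q(\mathfrak g)^-$, and interchange $\tilde{e}_i$ with $\tilde{f}_i$. I would first record the dual version of Lemma~\ref{kashiwaralemma}: for integrable $M$ with canonical basis and nice $U_q(\mathfrak g)^-$-submodule $N$, $U_q(\mathfrak g)N$ is a nice $U_q(\mathfrak g)$-submodule with $B(U_q(\mathfrak g)N)=\{\tilde{e}_{i_1}\cdots\tilde{e}_{i_m}b\mid m\geqslant 0,\ b\in B(N)\}\setminus\{0\}$. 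As the excerpt already notes, this follows from the $U_q(sl_2)$-reduction in the proof of Lemma~\ref{kashiwaralemma} after interchanging $\tilde{e}$ and $\tilde{f}$.

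Next, by the verbatim mirror of the lemma preceding Proposition~\ref{submodule}, $V_b(\lambda):=\sum_{c\leqslant b}kG(c)$ is a nice $U_q(\mathfrak g)^-$-submodule of $V(\lambda)$ with crystal basis $\{c\in B(\lambda)\mid c\leqslant b\}$: for $c\leqslant b$ and $\xi\in Q_+\setminus\{0\}$, $U_q(\mathfrak g)^-_{-\xi}G(c)$ lies in the weight space $V(\lambda)_{wtc-\xi}$, which is a $k$-sum of $G(d)$ with $wtd<wtc$, hence $d<c\leqslant b$ by the defining property of the order.

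The crucial technical step is to show that $V_b(\lambda)\otimes u_{-\mu}$ is a nice $U_q(\mathfrak g)^-$-submodule of $V(\lambda)\otimes V(-\mu)$. Two ingredients are required: (a) because $F_iu_{-\mu}=0$, the coproduct $\Delta(F_i)=F_i\otimes K_i^{-1}+1\otimes F_i$ collapses to action on the first tensor factor up to the invertible scalar $q^{\langle h_i,\mu\rangle}$ coming from $K_i^{-1}u_{-\mu}$, so $V(\lambda)\otimes u_{-\mu}$ is $U_q(\mathfrak g)^-$-stable; and (b) the bar-involution on $V(\lambda)\otimes V(-\mu)$ fixes $G(c)\otimes u_{-\mu}$, since the $\Theta$-corrections strictly decrease the weight in the second factor and $u_{-\mu}$ is already at the lowest weight, forcing $G(c\otimes u_{-\mu})=G(c)\otimes u_{-\mu}$ for every $c\in B(\lambda)$. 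Combining, $V_b(\lambda)\otimes u_{-\mu}=\sum_{c\leqslant b}kG(c\otimes u_{-\mu})$ is a sum of canonical basis elements of $V(\lambda)\otimes V(-\mu)$, and applying the dual Kashiwara lemma identifies $F_{-\mu}(b)$ as a nice $U_q(\mathfrak g)$-submodule with crystal basis $\{\tilde{e}_{i_1}\cdots\tilde{e}_{i_l}(c\otimes u_{-\mu})\mid c\leqslant b,\ l\geqslant 0\}\setminus\{0\}$, establishing the filtration (4.2).

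Finally, the neighbor analysis mirrors Theorem~\ref{main}'s proof exactly. For neighbors $b>c$ in $B(\lambda)$, any element of $B(F_{-\mu}(b))\setminus B(F_{-\mu}(c))$ must be of the form $\tilde{e}_{i_1}\cdots\tilde{e}_{i_l}(b\otimes u_{-\mu})$, since the index $d\leqslant b$ with $d\not\leqslant c$ is forced to be $b$. If $\tilde{f}_i(b\otimes u_{-\mu})=0$ for all $i\in I$, then $b\otimes u_{-\mu}$ itself lies in this difference (otherwise writing it as $\tilde{e}_{j_1}\cdots\tilde{e}_{j_t}(d\otimes u_{-\mu})$ with $t>0$ and $d\leqslant c$ and applying $\tilde{f}_{j_1}$ contradicts the annihilation), so the quotient is cyclically generated by a vector of lowest weight $-\mu+wtb$ and is isomorphic to $V(-\mu+wtb)$. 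Otherwise, some $\tilde{f}_ib\neq 0$ with $\tilde{f}_i(b\otimes u_{-\mu})=(\tilde{f}_ib)\otimes u_{-\mu}$; the lower weight forces $\tilde{f}_ib\leqslant c$, so $b\otimes u_{-\mu}=\tilde{e}_i((\tilde{f}_ib)\otimes u_{-\mu})\in F_{-\mu}(c)$ and $F_{-\mu}(b)=F_{-\mu}(c)$. The main obstacle is verifying (a) and (b) cleanly — the highest-$\otimes$-lowest case requires more delicate bookkeeping than the highest-$\otimes$-highest situation of Theorem~\ref{main} — but both are direct consequences once Lusztig's construction of the canonical basis of $V(\lambda)\otimes V(-\mu)$ is spelled out at the lowest weight vector.
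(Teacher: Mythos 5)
Your proposal is correct and follows essentially the same route as the paper: the paper proves this theorem precisely by this mirroring, namely the $U_q(\mathfrak g)^-$ version of Lemma~\ref{kashiwaralemma} together with the argument of Theorem~\ref{main} with the roles of $\tilde{e}_i$ and $\tilde{f}_i$ (and of $u_{\lambda}$, $u_{-\mu}$) interchanged. Your steps (a) and (b) are exactly the details the paper leaves implicit — (b) being the observation that the triangularity of the bar involution has no correction terms when the second tensor factor is the lowest weight vector $u_{-\mu}$, so $G(c)\otimes u_{-\mu}$ is itself a canonical basis element.
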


\subsection{Affine Type Case}

 For $\lambda\in P$, note that there is a subcrystal $B^{max}(\lambda)$ of
 $B(U_q(\mathfrak g)a_{\lambda})$ consisting of some $*$-extremal elements which is
 exactly the crystal basis of extremal weight module $V^{max}(\lambda)$
 \cite{Kashiwara:1994}. It is proved in  \cite{Kashiwara:1994} that
  \[V^{max}(\lambda)\cong V^{max}(w\lambda)\] for any $w\in W$ and
$V^{max}(\lambda)\cong V(\lambda)$ for $\lambda\in \pm P_{+}$.

\begin{proposition}\label{lengthlim}
(\cite{Kashiwara:1994}) For any connected component $B$ of
$\widetilde{B}$, there is an $l>0$ such that $(wtb,wtb)\leqslant l\
\textrm{for\ all}\ b\in B.$ Moreover, $B$ contains an extremal
vector and can be embedded into $B^{max}(\lambda)$ for some
$\lambda\in P$.
\end{proposition}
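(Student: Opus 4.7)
The plan is to first locate an extremal-weight vector inside the connected component $B$, then invoke the universal property of extremal weight modules to obtain an embedding $B\hookrightarrow B^{max}(\lambda)$, and finally to deduce the norm bound from the known weight set of $V^{max}(\lambda)$.

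For the first step, I would exploit the Weyl-group action on crystal elements of $\widetilde{B}$ defined by $S_i b=\tilde f_i^{\langle h_i,\,wt\,b\rangle}b$ when $\langle h_i, wt\,b\rangle\geqslant 0$ and $S_i b=\tilde e_i^{-\langle h_i,\,wt\,b\rangle}b$ otherwise; this operation satisfies $wt(S_i b)=s_i(wt\,b)$ and descends from a braid-group action on $\widetilde{B}$. Call $b$ \emph{extremal} if for every $w\in W$ and every $i\in I$ the element $S_w b$ is annihilated by $\tilde e_i$ when $\langle h_i, wt(S_w b)\rangle\geqslant 0$ and by $\tilde f_i$ when $\langle h_i, wt(S_w b)\rangle\leqslant 0$. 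Starting from any $b_0\in B$, if $b_0$ is not extremal there exist $w,i$ witnessing failure and I would apply the relevant Kashiwara operator; with a judicious choice of a ``refinement step'' the procedure terminates at some extremal $b^*$ lying in the same connected component $B$.

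Once such $b^*\in B$ with $\lambda:=wt\,b^*$ is in hand, the universal property of $V^{max}(\lambda)$ produces a nice $U_q(\mathfrak g)$-morphism $V^{max}(\lambda)\to U_q(\mathfrak g)a_\lambda$ sending the extremal generator to $G(b^*)$. At the crystal level this yields an embedding $B^{max}(\lambda)\hookrightarrow\widetilde{B}$ whose image is a union of connected components; since it contains $b^*\in B$, it contains the whole connected component $B$, which gives the desired $B\hookrightarrow B^{max}(\lambda)$.

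For the norm bound, every weight of $V^{max}(\lambda)$ lies in the $\mathbb Q$-convex hull of the orbit $W\lambda$. Since $(\cdot,\cdot)$ is $W$-invariant, each $w\lambda$ satisfies $(w\lambda,w\lambda)=(\lambda,\lambda)$, and a standard convexity argument, restricted to the Tits cone in affine or indefinite types, bounds $(\nu,\nu)$ by $(\lambda,\lambda)$ for every weight $\nu$ appearing in $V^{max}(\lambda)$; one then takes $l=(\lambda,\lambda)$. The main obstacle is the termination of the refinement in the first step: since $W$ is infinite in affine or indefinite types, a naive iteration need not stop, so one must identify a genuinely monotone invariant. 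The cleanest route is Kashiwara's, which uses the realization $B(U_q(\mathfrak g)a_\mu)\cong B(\infty)\otimes T_\mu\otimes B(-\infty)$ together with the $*$-crystal structure to produce the extremal vector by a direct combinatorial argument rather than by iterated reflections.
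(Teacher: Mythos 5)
First, note that the paper does not prove this proposition at all: it is quoted verbatim from \cite{Kashiwara:1994}, so the only meaningful comparison is with Kashiwara's argument, which your sketch does not reproduce. The decisive gap is your first step. Producing an extremal element inside the given connected component is precisely the hard content of the proposition, and your iteration of reflections $S_w$ and Kashiwara operators comes with no monotone invariant guaranteeing termination; you concede this and fall back on ``Kashiwara's direct combinatorial argument,'' which means the statement is not actually proved. Moreover, your logical order is the reverse of the one that works. In Kashiwara's treatment (and in the order in which the proposition itself is phrased) the bound $(wtb,wtb)\leqslant l$ on the component is established first, and the extremal vector is then extracted from it: if $\langle h_i, wtb\rangle\geqslant 0$ and $\tilde{e}_ib\neq 0$ then $(wt\tilde{e}_ib,wt\tilde{e}_ib)=(wtb,wtb)+2(\alpha_i,wtb)+(\alpha_i,\alpha_i)>(wtb,wtb)$, and dually for $\tilde{f}_i$ when $\langle h_i,wtb\rangle\leqslant 0$; hence an element of maximal squared norm in the component (which exists once the bound and the discreteness of the values are known) is extremal, since every $S_wb$ has the same norm and lies in the same component. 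Deriving the norm bound at the very end from the embedding, as you propose, therefore inverts the actual dependency and risks circularity.

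The later steps also lean on facts that are themselves theorems of Kashiwara rather than formal consequences of a universal property. To get a morphism out of $V^{max}(\lambda)$ you need $G(b^*)$ to be an extremal vector in the \emph{module} sense, not merely $b^*$ extremal in the crystal, and the target should be $U_q(\mathfrak g)a_{\mu}$ where $\mu$ indexes the block containing $B$; your ``$U_q(\mathfrak g)a_{\lambda}$'' is a slip, since $\lambda=wtb^*$ only lies in $\mu+Q$. More seriously, the claim that the induced crystal map $B^{max}(\lambda)\rightarrow\widetilde{B}$ is a strict embedding whose image is a union of connected components is exactly the nontrivial compatibility of the morphism with global and crystal bases established in \cite{Kashiwara:1994}; it does not follow from the existence of a module map alone. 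Finally, the proposed norm bound via the convex hull of $W\lambda$ is not a ``standard convexity argument'' outside finite type: in affine and indefinite types the invariant form is not positive semidefinite on $\mathfrak h^{*}$, so $\nu\mapsto(\nu,\nu)$ is not convex and its supremum over a convex hull need not be attained on the orbit $W\lambda$; the inequality $(wtb,wtb)\leqslant(\lambda,\lambda)$ for $b\in B^{max}(\lambda)$ is again one of Kashiwara's results and must be cited or proved, not assumed. As it stands, every essential ingredient of the proposition is deferred back to the very reference being proved, so the proposal does not constitute a proof.
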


For $\mathfrak g$ of affine type, let $c\in\mathfrak h$ be the
canonical central element of $\mathfrak g$. Given $\lambda\in P$, we
define $\langle c,\lambda\rangle$ to be the level of  $\lambda$,
denoted by $level(\lambda)$. It follows immediately from
Proposition~\ref{lengthlim} the following corollary.
\begin{corollary}\label{affine}
\begin{itemize}
\item[(i)]
For $\lambda$ with $level(\lambda)>0$, $B(U_q(\mathfrak
g)a_{\lambda})$ is a union of highest weight crystals.
\item[(ii)]
For $\lambda$ with $level(\lambda)<0$, $B(U_q(\mathfrak
g)a_{\lambda})$ is a union of lowest weight crystals.
\end{itemize}
\end{corollary}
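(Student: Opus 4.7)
The plan is to analyze each connected component of $B(U_q(\mathfrak g)a_\lambda)$ individually by applying Proposition~\ref{lengthlim}, then use the $W$-invariance of the extremal weight modules together with the standard fact that in affine type every weight of positive level is $W$-conjugate to a dominant integral weight (and symmetrically for negative level).

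For part (i), let $B$ be any connected component of $B(U_q(\mathfrak g)a_\lambda)$, viewed as a connected component of $\widetilde B$. Since $c$ is central we have $\langle c,\alpha_i\rangle=0$ for all $i\in I$, so the level is constant on $B$ and equals $level(\lambda)>0$. By Proposition~\ref{lengthlim}, $B$ contains an extremal vector $b_0$ of some weight $\nu\in P$ and embeds into $B^{max}(\nu)$; in particular $level(\nu)=level(\lambda)>0$. At this point I would invoke the standard affine fact that any $\nu$ with $level(\nu)>0$ lies in the $W$-orbit of a unique $\nu_+\in P_+$. Combined with $V^{max}(\nu)\cong V^{max}(w\nu)$ for $w\in W$ and $V^{max}(\nu_+)\cong V(\nu_+)$, this yields $B^{max}(\nu)\cong B(\nu_+)$, a highest weight crystal. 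It remains to argue that the embedding $B\hookrightarrow B(\nu_+)$ is actually an isomorphism of crystals: since $B(\nu_+)$ is connected and generated from $u_{\nu_+}$ by the $\tilde f_i$'s, applying a sequence of $\tilde e_i$'s to the image of $b_0$ inside $B(\nu_+)$ reaches $u_{\nu_+}$, and that chain lies in $B$; closure of $B$ under the Kashiwara operators then forces $B\cong B(\nu_+)$.

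Part (ii) is entirely symmetric. One may either repeat the argument using the lowest weight version of the facts above (for $level<0$, the $W$-orbit meets $-P_+$ uniquely, and $V^{max}(-\nu_+)\cong V(-\nu_+)$ is a lowest weight crystal), or invoke the Cartan-type involution $\omega:E_i\leftrightarrow F_i$ which exchanges highest and lowest weight integrable modules and sends level $k$ to level $-k$, thereby reducing (ii) to (i).

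The main obstacle will be the final step: Proposition~\ref{lengthlim} only furnishes an embedding $B\hookrightarrow B^{max}(\nu)$, whereas the corollary asserts that $B(U_q(\mathfrak g)a_\lambda)$ is a \emph{union} of highest weight crystals, i.e.\ each connected component is itself such a crystal. To close this gap one must track the extremal vector $b_0$ under the embedding, identify it with an extremal (and after $W$-translation, highest weight) vector in $B(\nu_+)$, and then use connectedness of $B$ together with the description of $B(\nu_+)$ as the $\tilde f_i$-orbit of $u_{\nu_+}$. Everything else—the constancy of the level, the $W$-invariance of $V^{max}$, and the orbit statement for positive-level weights—is standard input.
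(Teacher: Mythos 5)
Your argument is correct and follows essentially the same route the paper intends: the paper derives the corollary "immediately" from Proposition~\ref{lengthlim} together with the recalled facts $V^{max}(\nu)\cong V^{max}(w\nu)$ and $V^{max}(\nu_+)\cong V(\nu_+)$, exactly the ingredients you use (constancy of the level on a component, $W$-conjugacy of a positive-level weight to a dominant one, and then identifying the component with the highest weight crystal). Your extra step showing the strict embedding of the connected component into $B(\nu_+)$ is onto simply makes explicit what the paper leaves as immediate, so there is nothing genuinely different to flag.
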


It follows from the corollary that for $\lambda, \mu\in P_+$,
$B(\lambda,-\mu)$ is a union of highest ($resp$. lowest) weight
crystals if $level(\lambda-\mu)>0$ ($resp$. $level(\lambda-\mu)<0$).
We define $W(\lambda,-\mu)$ ($resp.$ $U(\lambda,-\mu)$) to be a
$k$-subspace $\bigcap_{b\in B(-\mu)}F_{\lambda}(b)$ ($resp.$
$\bigcap_{b\in B(\lambda)}F_{-\mu}(b)$) of $V(\lambda)\otimes
V(-\mu)$ and set \[M(\lambda,-\mu)=(V(\lambda)\otimes
V(-\mu))/W(\lambda,-\mu)\] \[(resp.\ \
N(\lambda,-\mu)=(V(\lambda)\otimes V(-\mu))/U(\lambda,-\mu)).\ \ \
\] Denote by $B^{+}(\lambda,-\mu)$ ($resp.$ $B^{-}(\lambda,-\mu)$)
the subcrystal of $B(\lambda, -\mu)$ which is the union of all
connect components of $B(\lambda, -\mu)$ that are not highest
($resp.$ lowest) weight crystals.

\begin{proposition} For $\lambda, \mu\in P_+$,
\begin{itemize}
\item[(i)]
both $W(\lambda,-\mu)$ and $U(\lambda,-\mu)$ are nice $U_q(\mathfrak
g)$-submodules of $V(\lambda)\otimes V(-\mu)$. Moreover,
$B(W(\lambda,-\mu))= B^{+}(\lambda,-\mu)$ and $B(U(\lambda,-\mu))=
B^{-}(\lambda,-\mu)$.
\item[(ii)]
both $M(\lambda,-\mu)$ and $N(\lambda,-\mu)$ admit canonical bases
and $B(M(\lambda,-\mu))= B(\lambda,-\mu)\setminus
B^+(\lambda,-\mu)$, $B(N(\lambda,-\mu))= B(\lambda,-\mu)\setminus
B^-(\lambda,-\mu)$.
\end{itemize}
\end{proposition}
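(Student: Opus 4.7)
The plan is to focus on $W(\lambda,-\mu)$ in detail; the assertions for $U(\lambda,-\mu)$ follow by the symmetric argument using the filtration $(4.2)$ in place of $(4.1)$ (with lowest weight vectors in place of highest weight vectors), and (ii) will be a formal consequence of (i). First I would observe that each $F_\lambda(b)$ is nice by the theorem governing $(4.1)$, so the intersection $W(\lambda,-\mu)=\bigcap_{b\in B(-\mu)}F_\lambda(b)$ is spanned as a $k$-vector space by $\{G(c):c\in\bigcap_b B(F_\lambda(b))\}$ and is therefore nice. Because $W(\lambda,-\mu)$ is a $U_q(\mathfrak g)$-submodule, the index set $B(W(\lambda,-\mu))\subseteq B(\lambda,-\mu)$ is closed under both $\tilde e_i$ and $\tilde f_i$, so it is automatically a union of connected components of $B(\lambda,-\mu)$. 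It therefore suffices to show that a connected component $C$ lies in $B(W(\lambda,-\mu))$ if and only if $C$ is not a highest weight crystal.

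For the non-trivial direction, I would take $c\notin B(W(\lambda,-\mu))$, enumerate $B(-\mu)=\{b_1<b_2<\cdots\}$ as in the filtration $(4.1)$, and let $j$ be the smallest index with $c\notin F_\lambda(b_j)$. Then $c\in F_\lambda(b_{j-1})\setminus F_\lambda(b_j)$, so by the filtration theorem $F_\lambda(b_{j-1})/F_\lambda(b_j)\cong V(\lambda+wt b_{j-1})$ is a highest weight module with highest weight vector the image of $u_\lambda\otimes b_{j-1}$, and the image $\bar c$ is a nonzero canonical basis element there. Choose a Kashiwara string $\tilde f_{i_1}\cdots\tilde f_{i_l}$ taking $\overline{u_\lambda\otimes b_{j-1}}$ to $\bar c$ in the quotient. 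Niceness of $F_\lambda(b_j)$ implies that the Kashiwara operator on the quotient equals the ambient Kashiwara operator whenever the latter's value is not in $B(F_\lambda(b_j))$ and is zero otherwise; since no intermediate element of the chosen string vanishes in the quotient, an induction shows that the same string, applied in $B(\lambda,-\mu)$, traces out the same sequence of nonzero elements and ends precisely at $c$. Thus $c$ lies in the connected component of the highest weight vector $u_\lambda\otimes b_{j-1}$, which is therefore a highest weight crystal.

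For the converse, by the tensor rule for crystal bases the highest weight vectors of $B(\lambda,-\mu)$ are exactly the $u_\lambda\otimes b$ with $b\in B(-\mu)$ $\lambda$-dominant. For such a $b$, taking $b'$ to be its immediate successor in the enumeration, the filtration theorem gives $F_\lambda(b)/F_\lambda(b')\cong V(\lambda+wt b)$ with $u_\lambda\otimes b$ mapping to the highest weight vector; in particular $u_\lambda\otimes b\notin F_\lambda(b')$, hence $u_\lambda\otimes b\notin W(\lambda,-\mu)$. Since $B(W(\lambda,-\mu))$ is a union of connected components, the whole component containing $u_\lambda\otimes b$ lies outside $B(W(\lambda,-\mu))$, completing the proof that $B(W(\lambda,-\mu))=B^+(\lambda,-\mu)$. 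Part (ii) is then immediate since quotienting by a nice submodule produces a module with canonical basis indexed by the complement, giving $B(M(\lambda,-\mu))=B(\lambda,-\mu)\setminus B^+(\lambda,-\mu)$ and likewise for $N(\lambda,-\mu)$.

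The main obstacle I expect is the precise compatibility between Kashiwara operators on $V(\lambda)\otimes V(-\mu)$ and on the successive quotient $F_\lambda(b_{j-1})/F_\lambda(b_j)$: justifying rigorously that a string realizing a nonzero canonical basis element in the quotient lifts unchanged to the ambient crystal and yields exactly $c$. This hinges on Lemma 3.1 and the niceness of $F_\lambda(b_j)$, which together rule out any cancellations of canonical basis elements in passing to the quotient.
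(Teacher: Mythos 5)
Your proof is correct and follows essentially the same route as the paper: intersect the nice terms of the filtration $(4.1)$, use the filtration theorem to see that any $c\notin B(W(\lambda,-\mu))$ lies in the component of a maximal vector $u_{\lambda}\otimes b_{j-1}$ whose component is a highest weight crystal, and use the tensor rule to characterize the maximal vectors of $B(\lambda,-\mu)$ as the $u_{\lambda}\otimes b$ with $\varepsilon_i(b)\leqslant\langle h_i,\lambda\rangle$ for all $i\in I$, which is exactly the input the paper's terse proof invokes. The only stylistic difference is your lifting of Kashiwara strings through the quotient $F_{\lambda}(b_{j-1})/F_{\lambda}(b_j)$, which can be bypassed by quoting directly the description $B(F_{\lambda}(b))=\{\tilde{f}_{i_1}\cdots\tilde{f}_{i_l}(u_{\lambda}\otimes c)\mid c\geqslant b\}\setminus\{0\}$ of Proposition~\ref{submodule} (adapted to $B(-\mu)$), which also shows at once that the difference $B(F_{\lambda}(b_{j-1}))\setminus B(F_{\lambda}(b_j))$ is a single component isomorphic to $B(\lambda+wtb_{j-1})$.
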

\begin{proof}
$W(\lambda,-\mu)$ admits a $U_q(\mathfrak g)$-action since every
$F_{\lambda}(b)$ does. The conclusion for $W(\lambda,-\mu)$ in
$(\romannumeral 1)$ follows from Theorem~\ref{main} and that any
maximal vector in $B(\lambda, -\mu)$ is of the form
$u_{\lambda}\otimes b$ with $b\in B(-\mu)$ and
$\varepsilon_i(b)\leqslant\langle h_i,\lambda\rangle$ for all $i\in I$.
It is similar for $U(\lambda,-\mu)$ and $(\romannumeral 2)$ is
implied by $(\romannumeral 1)$.
\end{proof}
When $\mathfrak g$ is of finite type, one can see that
$W(\lambda,-\mu)=U(\lambda,-\mu)=0$ and both (3.1) and (4.2) provide
composition series of $V(\lambda)\otimes V(-\mu)$ by deleting
superfluous terms.

For two crystals $B_1$ and $B_2$ where $B_1$ is connected, let
$[B_2: B_1]$ be the cardinality of the set which consists of all
connected components of $B_2$ isomorphic to $B_1$, i.e. $[B_2:
B_1]=\{B\subset B_2\ |\ B\cong B_1\}^{\#}.$
\begin{theorem}\label{multiplicity}
For $\lambda\in P_+$ and $\mu\in P$, $[B(U_q(\mathfrak g)a_{\mu}):
B(\lambda)]=dimV(\lambda)_{\mu}.$
\end{theorem}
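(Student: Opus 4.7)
The plan is to reinterpret $[B(U_q(\mathfrak g)a_\mu):B(\lambda)]$ as a count of highest weight elements, and then to compute that count via the realization $B(U_q(\mathfrak g)a_\mu)\cong B(\infty)\otimes T_\mu\otimes B(-\infty)$ noted earlier in the paper.

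First I would argue that $[B(U_q(\mathfrak g)a_\mu):B(\lambda)]$ equals the cardinality of the set $\{b\in B(U_q(\mathfrak g)a_\mu):\tilde e_i b=0\ \forall i\in I,\ \mathrm{wt}(b)=\lambda\}$. Since $\lambda\in P_+$, the crystal $B(\lambda)$ has a unique highest weight element, so any connected subcrystal isomorphic to $B(\lambda)$ is determined by its (necessarily weight-$\lambda$) highest weight element. Conversely, given a highest weight element $b$ of weight $\lambda$ in $B(U_q(\mathfrak g)a_\mu)$, Proposition~\ref{lengthlim} shows that its connected component embeds into some $B^{\max}(\xi)$; the image of $b$ is then an extremal vector of dominant weight $\lambda$, which forces $\xi\in W\lambda$ and identifies the component with $B^{\max}(\lambda)=B(\lambda)$.

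Next I would analyze highest weight elements in the three-fold tensor product. Writing such an element as $b_1\otimes t_\mu\otimes b_2$, the weight condition is $\mathrm{wt}(b_1)+\mu+\mathrm{wt}(b_2)=\lambda$; using $\varepsilon_i(t_\mu)=\varphi_i(t_\mu)=-\infty$, the crystal tensor rule reduces the condition $\tilde e_i(b_1\otimes t_\mu\otimes b_2)=0$ to explicit inequalities on $\varepsilon_i(b_1),\varphi_i(b_1),\varepsilon_i(b_2)$ involving $\langle h_i,\mu\rangle$. I would then set up a bijection with $B(\lambda)_\mu$ using Kashiwara's realization of $B(\lambda)$ as the subcrystal of $B(\infty)\otimes T_\lambda$ cut out by the conditions $\varepsilon_i^*(b)\leq\langle h_i,\lambda\rangle$ (from the $*$-crystal structure on $B(\infty)$), together with a Chevalley-type identification of the $B(-\infty)$-factor with the appropriate $B(\infty)$-data. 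Counting on both sides then yields $\#B(\lambda)_\mu=\dim V(\lambda)_\mu$.

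The main obstacle is verifying this combinatorial bijection in full generality, in particular matching the inequalities produced by the tensor rule on the $B(\infty)\otimes T_\mu\otimes B(-\infty)$ side with the $\varepsilon_i^*$-inequalities defining $B(\lambda)$ inside $B(\infty)$. A complementary route is to use the stability Proposition~\ref{stability} to embed $B(\lambda'+\theta,-\theta-\nu')$ into $B(U_q(\mathfrak g)a_\mu)$ for a fixed decomposition $\mu=\lambda'-\nu'$, and to apply the generalized Littlewood--Richardson rule of Section~3 to count highest weight elements of weight $\lambda$ in these finite pieces; in finite type, the duality $V(-\nu'')^*\cong V(\nu'')$ and Frobenius reciprocity identify this count with $[V(\lambda)\otimes V(\nu''):V(\lambda'')]$, which stabilizes to $\dim V(\lambda)_\mu$ by the classical Brauer--Littlewood--Richardson stability. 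The crystal-theoretic bijection above is what extends the statement to arbitrary symmetrizable Kac--Moody $\mathfrak g$.
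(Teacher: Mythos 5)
Your overall strategy is the same as the paper's: reduce $[B(U_q(\mathfrak g)a_{\mu}):B(\lambda)]$ to a count of maximal (highest weight) vectors of weight $\lambda$ in $B(\infty)\otimes T_{\mu}\otimes B(-\infty)$, then convert that count into $\dim V(\lambda)_{\mu}$ via Kashiwara's description of $B(\pm\lambda)$ inside $B(\pm\infty)\otimes T_{\pm\lambda}$. Your justification of the first reduction via Proposition~\ref{lengthlim} is fine (indeed more explicit than the paper, which simply asserts it), and the tensor-rule analysis you sketch does work out: since $\tilde e_i$ never annihilates an element of $B(-\infty)$, maximality forces $b_1=u_{\infty}$, and a vector $u_{\infty}\otimes t_{\mu}\otimes b_2$ is maximal of weight $\lambda$ iff $\mathrm{wt}\,b_2=\lambda-\mu$ and $\varepsilon_i(b_2)\leqslant\langle h_i,\mu\rangle$, equivalently $\varphi_i(b_2)\leqslant\langle h_i,\lambda\rangle$, for all $i$.

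However, the step you defer as ``the main obstacle'' is precisely the only substantive point of the proof, and you do not close it; your fallback via Proposition~\ref{stability}, duality and Frobenius reciprocity is confined to finite type, so it cannot carry the general symmetrizable Kac--Moody statement. The missing ingredient is the $*$-involution. The tensor rule produces the \emph{unstarred} condition $\varphi_i(b_2)\leqslant\langle h_i,\lambda\rangle$ on $b_2\in B(-\infty)_{\lambda-\mu}$, whereas Kashiwara's embedding $B(-\lambda)\hookrightarrow T_{-\lambda}\otimes B(-\infty)$ has image cut out by the \emph{starred} condition $\varphi_i^{*}(b)\leqslant\langle h_i,\lambda\rangle$, so that the starred set in weight $\lambda-\mu$ has cardinality $\dim V(-\lambda)_{-\mu}=\dim V(\lambda)_{\mu}$. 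These two subsets of $B(-\infty)_{\lambda-\mu}$ are in general different sets, so the inequalities will not ``match'' directly; what saves the argument is that $*$ is a weight-preserving bijection of $B(-\infty)$ interchanging $\varphi_i$ with $\varphi_i^{*}$, hence restricts to a bijection between the unstarred and starred sets in each fixed weight, giving equality of cardinalities. (The same remark applies if you insist on transporting everything to $B(\infty)$ by a Chevalley-type isomorphism: you still need this $*$-bijection, stated explicitly, to pass from $\varepsilon_i$ to $\varepsilon_i^{*}$.) With that one sentence added, your outline becomes essentially the paper's proof; without it, the counting identity $\#\{b\in B(-\infty)_{\lambda-\mu}\ |\ \varphi_i(b)\leqslant\langle h_i,\lambda\rangle\ \forall i\}=\dim V(\lambda)_{\mu}$ is unproved.
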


\begin{proof}
We only need to find out all maximal vectors in $B(U_q(\mathfrak
g)a_{\mu})$. Note that $B(U_q(\mathfrak g)a_{\mu})=B(\infty)\otimes
T_{\mu}\otimes B(-\infty)$ and $\tilde{e}_i$ acts on it as
\begin{displaymath} \tilde{e}_i(b_1\otimes t_{\mu}\otimes b_2)=
\left\{
\begin{array}{ll}
(\tilde{e}_ib_1)\otimes t_{\mu}\otimes b_2 & \textrm{if
$\varphi_i(b_1)+\langle h_i, \mu\rangle\geqslant \varepsilon_i(b_2)$
}\\

b_1\otimes t_{\mu}\otimes(\tilde{e}_ib_2) & \textrm{if
$\varphi_i(b_1)+\langle h_i, \mu\rangle<\varepsilon_i(b_2)$.}
\end{array} \right.
\end{displaymath}
 Assume that $b_1\otimes t_{\mu}\otimes b_2$ is maximal, since $\tilde{e}_ib_2\neq
 0$ for all $b_2\in B(-\infty)$, we have $\tilde{e}_ib_1=0$ and
 \[\varphi_i(b_1)+\langle h_i, \mu \rangle\geqslant
 \varepsilon_i(b_2)\ \ \ \ \ \ \ \ \  (4.3)\]  for all $i\in I$. Hence
 $b_1=u_{\infty}$ which is the image of $1$. \\

 Now, we claim that
 $u_{\infty}\otimes t_{\mu}\otimes b_2$ is
a maximal vector of weight $\lambda$ iff $wtb_2=\lambda-\mu$ and
$\varphi_i(b_2)\leqslant \langle h_i, \lambda\rangle$ for all $i\in I$.
Indeed, if $u_{\infty}\otimes t_{\mu}\otimes b_2$ is maximal and
$wt(u_{\infty}\otimes t_{\mu}\otimes b_2)=\mu+wtb_2=\lambda$, then
$wtb_2=\lambda-\mu$ and (4.3) holds which can be rewritten as
$\langle h_i, \mu\rangle\geqslant
 \varepsilon_i(b_2)$ since $\varphi_i(u_{\infty})=0$. It follows from
 $\varphi_i(b_2)-\varepsilon_i(b_2)=\langle h_i , wtb_2\rangle$ that
 $\langle h_i, \mu \rangle\geqslant\varphi_i(b_2)-\langle h_i,
 wtb_2\rangle$ which implies $\varphi_i(b_2)\leqslant \langle h_i,
 \lambda\rangle$. The other side of the claim is easy.\\

 It has been shown by Kashiwara in \cite{Kashiwara:1991} that for $\xi\in P_+$
 there is an embedding of crystals \[\tau :\  B(-\xi)\longrightarrow
 T_{-\xi}\otimes B(-\infty)\] whose image is $Im\tau=\{t_{-\xi}\otimes b\ |\
 \varphi_i^{*}(b)\leqslant\langle h_i, \xi\rangle\ \forall\ i\in
 I\}.$ Hence for $\eta\in P$, \[\{ b\in B(-\infty)_{\xi-\eta} |\
 \varphi_i^{*}(b)\leqslant\langle h_i, \xi\rangle\ \forall\ i\in
 I\}^{\#}=dimV(-\xi)_{-\eta}=dimV(\xi)_{\eta}\
 \ \  \ (4.4).\] Recall that $*$ acts bijectively on $B(-\infty)$.
By restricting the $*$-action on $\{b\in B(-\infty)\ |\
 \varphi_i(b)\leqslant\langle h_i, \lambda\rangle\ \forall\ i\in
 I\}$ we get a bijection
$\{b\in B(-\infty)|
 \varphi_i(b)\leqslant\langle h_i, \lambda\rangle\ \forall\ i\in
 I\}\longleftrightarrow \{b\in B(-\infty)|
 \varphi_i^{*}(b)\leqslant\langle h_i, \lambda\rangle\ \forall\ i\in I\}.$
 Hence there is a bijection $\{b\in B(-\infty)_{\lambda-\mu}|
 \varphi_i(b)\leqslant\langle h_i, \lambda\rangle\ \forall\ i\in I\}
 \longleftrightarrow \{b\in B(-\infty)_{\lambda-\mu}|$
 $\varphi_i^{*}(b)\leqslant\langle h_i, \lambda\rangle\ \forall\ i\in
I\}.$
 From (4.4) and the claim above we know that the number of maximal
 vectors in $B(U_q(\mathfrak g)a_{\mu})$ of weight $\lambda$ equals
 \[\{b\in B(-\infty)_{\lambda-\mu}|
 \varphi_i(b)\leqslant\langle h_i, \lambda\rangle\ \forall\
 i\in I\}^{\#}=dimV(\lambda)_{\mu}.\]
 \end{proof}

 Let $P_0$ be the subset of $P_+$ consisting of weights $\lambda$ such that
 $\langle h_i, \lambda\rangle=0$ for all $i\in I$.
 We have the following corollary.

\begin{corollary}
\begin{itemize}
\item[(i)]
$W(\lambda,-\mu)=N(\lambda,-\mu)=0$ and
$M(\lambda,-\mu)=U(\lambda,-\mu)$
$=V(\lambda)\otimes V(-\mu)$ if
$level(\lambda-\mu)>0$.
\item[(ii)]
$W(\lambda,-\mu)=N(\lambda,-\mu)=V(\lambda)\otimes V(-\mu)$ and
$M(\lambda,-\mu)=U(\lambda,-\mu)=0$ if $level(\lambda-\mu)<0$.
\item[(iii)]
$M(\lambda,-\mu)=N(\lambda,-\mu)$ is a 1-dimensional trivial module
if $\lambda-\mu\in P_0$, otherwise if $\lambda-\mu\notin P_0$ is of
level 0, $W(\lambda,-\mu)=U(\lambda,-\mu)=V(\lambda)\otimes V(-\mu)$
and $M(\lambda,-\mu)=N(\lambda,-\mu)=0$.
\end{itemize}
\end{corollary}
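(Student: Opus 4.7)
The plan is to describe the connected components of $B(\lambda,-\mu)$ via the strict embedding $B(\lambda,-\mu) \hookrightarrow B(U_q(\mathfrak g)a_{\lambda-\mu})$, and then read $W(\lambda,-\mu)$ and $U(\lambda,-\mu)$ off from the preceding proposition, which identifies $B(W(\lambda,-\mu)) = B^+(\lambda,-\mu)$ and $B(U(\lambda,-\mu)) = B^-(\lambda,-\mu)$. Since the embedding is strict, a connected component of $B(\lambda,-\mu)$ maps onto a connected component of the ambient crystal $B(U_q(\mathfrak g)a_{\lambda-\mu})$, so in each level regime I can transfer the structural input of Corollary~\ref{affine} and Proposition~\ref{lengthlim}.

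For part (i), assume $level(\lambda-\mu)>0$. By Corollary~\ref{affine}(i), every connected component of $B(U_q(\mathfrak g)a_{\lambda-\mu})$, and hence of $B(\lambda,-\mu)$, is a highest weight crystal $B(\eta)$ with $\eta\in P_+$ of positive level; in particular $\eta\neq 0$, so some $\langle h_i,\eta\rangle>0$, whence $B(\eta)$ is infinite and cannot be a lowest weight crystal. Consequently $B^+(\lambda,-\mu)=\emptyset$ and $B^-(\lambda,-\mu)=B(\lambda,-\mu)$, yielding $W(\lambda,-\mu)=N(\lambda,-\mu)=0$ and $M(\lambda,-\mu)=U(\lambda,-\mu)=V(\lambda)\otimes V(-\mu)$. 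Part (ii) follows by the symmetric argument from Corollary~\ref{affine}(ii).

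For part (iii), let $level(\lambda-\mu)=0$. By Proposition~\ref{lengthlim} every connected component of $B(\lambda,-\mu)$ is isomorphic to $B^{max}(\xi)$ for some $\xi\in P$ of level $0$, and using $V^{max}(\xi)\cong V^{max}(w\xi)$ one may take $\xi$ dominant. Standard facts on extremal weight modules then give that $V^{max}(\xi)$ is the one-dimensional trivial module when $\xi\in P_0$, while for $\xi\notin P_0$ the orbit $W\xi$ is infinite and $V^{max}(\xi)$ has neither a highest nor a lowest weight vector. A direct tensor-rule calculation shows that the only candidate for a singleton component of $B(\lambda,-\mu)$ is $\{u_\lambda\otimes u_{-\mu}\}$, and that it is a singleton component precisely when $\langle h_i,\lambda-\mu\rangle=0$ for every $i$, i.e.\ when $\lambda-\mu\in P_0$. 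Splitting on this dichotomy: if $\lambda-\mu\in P_0$, then $B^+(\lambda,-\mu)=B^-(\lambda,-\mu)=B(\lambda,-\mu)\setminus\{u_\lambda\otimes u_{-\mu}\}$, so $W=U$ is the codimension-one subspace complementary to $k\,G(u_\lambda\otimes u_{-\mu})$ and $M(\lambda,-\mu)=N(\lambda,-\mu)$ is the trivial one-dimensional module; if $\lambda-\mu\notin P_0$ of level zero, no singleton component exists and every component lies in some $V^{max}(\xi)$ with $\xi\notin P_0$, hence has neither a highest nor a lowest weight vector, so $B^+=B^-=B(\lambda,-\mu)$ and $W=U=V(\lambda)\otimes V(-\mu)$, $M=N=0$.

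The main obstacle is the level-zero analysis in part (iii): one needs to verify that the strict embedding really sends connected components to connected components, so that the classification via extremal weights transfers, and one needs the input that $V^{max}(\xi)$ at level zero with $\xi\notin P_0$ has no highest and no lowest weight vector, which rests on the infiniteness of $W\xi$. The tensor-rule identification of the sole potential singleton as $\{u_\lambda\otimes u_{-\mu}\}$, while a short calculation, is what forces the appearance of $P_0$ as the precise boundary between the two sub-cases.
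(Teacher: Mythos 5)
Parts (i) and (ii) of your argument are correct and essentially the paper's own: they follow from Corollary~\ref{affine} via the strict embedding $B(\lambda,-\mu)\hookrightarrow B(U_q(\mathfrak g)a_{\lambda-\mu})$, together with the (true, and worth stating) remark that a positive-level highest weight crystal cannot be a lowest weight crystal. Your tensor-rule computation in (iii) is also correct: the only possible singleton component of $B(\lambda)\otimes B(-\mu)$ is $u_{\lambda}\otimes u_{-\mu}$, and it is a singleton exactly when $\langle h_i,\lambda\rangle=\langle h_i,\mu\rangle$ for all $i$, i.e.\ $\lambda-\mu\in P_0$.

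The gap is in the level-zero reduction of (iii). First, Proposition~\ref{lengthlim} only gives an embedding of a connected component into some $B^{max}(\xi)$, not an isomorphism. More seriously, the step ``using $V^{max}(\xi)\cong V^{max}(w\xi)$ one may take $\xi$ dominant'' is false at level zero: the canonical central element is a positive integral combination of the $h_i$, so a dominant weight of level zero already lies in $P_0$, and $P_0$ is fixed pointwise by $W$; hence a level-zero weight outside $P_0$ has no dominant $W$-conjugate. The fact you then invoke as standard --- that a level-zero $V^{max}(\xi)$ with $\xi\notin P_0$ has neither a highest nor a lowest weight vector --- does not follow from ``the infiniteness of $W\xi$'' as stated. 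The argument that actually closes (iii) is elementary: if $b$ is a maximal vector of an integrable crystal then $\varepsilon_i(b)=0$, so $\langle h_i, wtb\rangle=\varphi_i(b)\geqslant 0$ for all $i$; at level zero this forces $wtb\in P_0$, hence $\varphi_i(b)=0$ for all $i$ and $b$ is an isolated vertex (and dually for minimal vectors). Combined with your tensor-rule identification of the unique isolated vertex, this gives (iii) with no appeal to extremal weight modules at all. The paper takes yet another route, deducing (iii) from Theorem~\ref{multiplicity}: a highest weight component of $B(U_q(\mathfrak g)a_{\lambda-\mu})$ at level zero must have highest weight $\eta\in P_+$ of level zero, so $\eta\in P_0$ and $[B(U_q(\mathfrak g)a_{\lambda-\mu}):B(\eta)]=\dim V(\eta)_{\lambda-\mu}=\delta_{\eta,\lambda-\mu}$, which yields the dichotomy at once. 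Either repair is fine, but as written your level-zero case is not justified.
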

\begin{proof}
$(\romannumeral 1)$,  $(\romannumeral 2)$ come from
Corollary~\ref{affine}. $(\romannumeral 3)$ holds since there is no
highest or lowest weight subcrystal in $B(\lambda, -\mu)$ if
$\lambda-\mu\notin P_0$ is of level 0 while there is only one
trivial subcrystal for $\lambda-\mu\in P_0$ by
Theorem~\ref{multiplicity}.
\end{proof}

We can see from this corollary that for $\mathfrak g$ of affine
type, (4.1) ($resp.$ (4.2)) provides a nice composition series of
$V(\lambda)\otimes V(-\mu)$ by deleting superfluous terms when
$\lambda-\mu$ is of a positive ($resp.$ negative) level.

\end{document}